\newcommand{\R}{\mathbb{R}}
\newcommand{\N}{\mathbb{N}}
\newcommand{\norm}[1]{\left|\left| #1 \right|\right|}
\newcommand{\innerprod}[2]{\left\langle #1,#2 \right\rangle}
\newtheorem{theorem}{Theorem}
\newtheorem{lemma}[theorem]{Lemma}
\newtheorem{assum}[theorem]{Assumption}
\newtheorem*{remark}{Remark}
\title{Folding Bilateral Backstepping Output-Feedback Control Design For an Unstable Parabolic PDE}
\author{
	Stephen Chen$^{1}$~\IEEEmembership{Student Member,~IEEE,}, Rafael Vazquez$^{2}$~\IEEEmembership{Member,~IEEE,}, and Miroslav Krstic$^{1}$~\IEEEmembership{Fellow,~IEEE,}
	\thanks{$^{1}$Stephen Chen and Miroslav Krstic are with the Department of Mechanical and Aerospace Engineering, University of California, San Diego, La Jolla, CA 92093-0411 USA (e-mail: stc007@ucsd.edu and krstic@ucsd.edu, respectively).}%
	\thanks{$^{2}$Rafael Vazquez is with the Departmento de Ingenier\'ia Aeroespacial, Universidad de Sevilla, 41092 Sevilla, Spain (e-mail: rvazquez1@us.es).}%
}
\begin{document}
\maketitle


\begin{abstract}
	We present a novel methodology for designing output-feedback backstepping boundary controllers for an unstable 1-D diffusion-reaction partial differential equation with spatially-varying reaction. Using ``folding transforms'' the parabolic PDE into a $2 \times 2$ coupled parabolic PDE system with coupling via folding boundary conditions. The folding approach is novel in the sense that the design of bilateral controllers are generalized to center around arbitrary points, which become additional design parameters that can be separately chosen for the state-feedback controller and the state observer. The design can be selectively biased to achieve different performance indicies (e.g. energy, boundedness, etc). A first backstepping transformation is designed to map the unstable system into a strict-feedback intermediate target system. A second backstepping transformation is designed to stabilize the intermediate target system. The invertibility of the two transformations guarantees that the derived state-feedback controllers exponentially stabilize the trivial solution of the parabolic PDE system in the $L^2$ norm sense. A complementary state observer is likewise designed for the dual problem, where two collocated measurements are considered at an arbitrary point in the interior of the domain. The observer generates state estimates which converge to the true state exponentially fast in the $L^2$ sense. Finally, the output feedback control law is formulated by composing the state-feedback controller with the state estimates from the observer, and the resulting dynamic feedback is shown to stabilize the trivial solution of the interconnected system in the $L^2$ norm sense. Some analysis on how the selection of these points affect the responses of the controller and observer are discussed, with simulations illustrating various choices of folding points and their effect on the stabilization.
\end{abstract}

\section{Introduction}
Parabolic partial differential equations (PDEs) describe numerous physical processes, which include but are not limited to heat transfer, chemical reaction-diffusion processes, tumor angiogenesis \cite{doi:10.1146/annurev.bioeng.8.061505.095807}, predator-prey Lotka-Volterra population models \cite{Hastings1978}, opinion dynamics (of the Fischer-Kolmogorov-Petrovsky-Piskunov type equation \cite{opiniondynamics}), free-electron plasma diffusion, and flows through porous media \cite{vazquez2007porous}.

Previous results in boundary control for 1-D PDEs has been largely focused on unilateral boundary controllers, i.e. controllers acting on a single boundary. Results have been generated for a wide variety of parabolic PDE systems and objectives, beginning with the classical scalar 1-D PDE with homogeneous media results \cite{krsticbacksteppingpde}. Other extensions to the parabolic PDE boundary control case introduce nonhomgeneous media (such as \cite{smyshlyaev2005spacetime}), parallel interconnected parabolic PDE systems \cite{vazquez2016coupledpara}, series interconnected parabolic PDE systems \cite{tsubakino2013boundary}, and output feedback extensions for coupled parabolic PDE \cite{orlov2017outputfdback}. Some work that is tangentially related is that of \cite{wang2013backstepping},\cite{woittennek2014backstepping}, which investigates a problem of using an in-domain actuation to control a parabolic PDE.

The notion of bilateral boundary control is partially motivated by boundary control of balls in $\R^n$ \cite{vazquez2017boundary}, in which the controls actuate on the surface of the $n$-dimensional ball. The analogous case (in 1-D) is a controller actuating on the boundary of the $1$-dimensional ball, i.e., the endpoints of an interval. Bilateral control has been studied in some contexts for both hyperbolic and parabolic PDE systems. \cite{auriol2018two} studies bilateral controllers achieving minimum-time convergence in coupled first-order hyperbolic systems via a Fredholm transformation technique, while \cite{vazquez2016bilateral} additionally studies bilateral control for diffusion-reaction equations, albeit with the limitatin of a symmetric Volterra transformation. \cite{nikos2018bilateralhj} studies a nonlinear viscous Hamilton-Jacobi PDE, which likewise uses the symmetric Volterra transformation from \cite{vazquez2016bilateral}.

Boundary observer design is of equal (and perhaps arguably more) importance when compared with the boundary controller design. Many results have been generated as a dual problem to the boundary controller case. In \cite{SMYSHLYAEV2005613}, a boundary observer design for parabolic PDEs is formulated, with measurements taken at a boundary (in both collocated and anticollocated cases). \cite{baccoli2015anticollocated} studies a coupled parabolic PDE system with identical diffusion coefficients. \cite{camacho2017coupled} recovers a result for coupled parabolic PDEs with varying diffusion coefficients.

The main contribution of the paper are results for bilateral control of diffusion-reaction equations with spatially-varying reaction via the method of ``folding,'' i.e. using an arbitrarily defined domain separation and transformation to design the boundary controllers. The idea of folding has been touched upon in the hyperbolic context \cite{de2016boundary}, where the authors have explored a linearized Rijke tube model. The folding technique admits a design parameter (called the folding point) whose choice influences the control effort exerted by the boundary controllers. Additionally, a state-estimator is designed to complement the state-feedback controller. The state-estimator is an interesting new development in which collocated measurements are taken from any arbitrary point in the interior of the PDE, and the folding approach applied. Finally, the output feedback is formulated by combining both the state-feedback and state-estimation.

The state observer design is an interesting development, as it generates a result where measurements are taken at a single measure zero point in the interior. It is of physical importance, as measurements at the boundary are not necessarily guaranteed for a given realization. \cite{tsubakino2015weighted} has also investigated observer designs where measurements are not given at a boundary, rather, as a weighted average (the state appearing underneath a bounded integral operator). A related result is \cite{FREUDENTHALER20176780}, in which the authors consider the combination of boundary measurements with a single interior measurement to achieve estimation convergence for semilinear parabolic problems.

The primary technical difficulty in the paper is compensating the folding-type boundary conditions, which arises due to the regularity property of the solutions. In hyperbolic PDE, this constitutes an imposition of continuity -- a first-order compatibility condition. However, in parabolic PDE, one must treat second-order compatibility conditions existing at the same point, which will require additional correctional designs to compensate.

The paper is organized as follows: the notations and model are introduced in Section \ref{sec:prelim}. The output feedback controller consisting of the state-feedback and state-estimator designs is developed in Section \ref{sec:controller}. The gain kernel well posedness is studied in Section \ref{sec:wellposed}. Some simulations for various folding scenarios are given and analyzed in Section \ref{sec:simulation}. Finally, the paper is concluded in Section \ref{sec:conclusions}.

\section{Preliminaries}
\label{sec:prelim}
\subsection{Notation}
The partial operator is notated using the del-notation, i.e.
\begin{align*}
	\partial_x f &:= \frac{\partial f}{\partial x}
\end{align*}

$L^2(I_o)$ is defined as the the $L^2$ space on the interval $I_o$, equipped with the norm
\begin{align*}
	\norm{f}_{L^2(I_o)} &= \left(\int_{I_o} f^2 \,\,d\mathbf{\mu} \right)^{\frac{1}{2}}
\end{align*}

We also consider the standard inner product (that induces the standard norm) for $L^2$:
\begin{align*}
	\innerprod{f}{g}_{L^2(I_o)} &= \int_{I_o} f \cdot g \,\,d\mathbf{\mu}
\end{align*}

For compact notation, we will let $L^2(I_o)$ be represented merely as $L^2$, where the interval is implied by the function.
The norm notation ($\norm{\cdot}$) is used to notate the function norm over the vector 2-norm (notated with $|\cdot|_2$). If the norm is taken over a matrix function, then the induced 2-norm is implied, i.e. for vector-valued function $f$ and matrix-valued function $F$:
\begin{align*}
	\norm{f}_{L^2} := \left(\int_{I_o} \norm{f}_2^2 d\mathbf{\mu}\right)^{\frac{1}{2}}
	\norm{F}_{L^2} := \left(\int_{I_o} \norm{F}_{2,i}^2 d\mathbf{\mu}\right)^{\frac{1}{2}}
\end{align*}

Furthermore, if $f$ is a function of the space-time tuple $(x,t)$, the norm is assumed to be the norm in space ($x$) unless otherwise stated. The written $x$-dependence is dropped, i.e.
\begin{align*}
	\norm{f(x,t)}_{L^2} = \norm{f(t)}_{L^2} := \left(\int_{I_o} |f(t)|_2^2 \,\, d\mathbf{\mu} \right)^{\frac{1}{2}}
\end{align*}

We will introduce the notion of stability in the sense of a norm. Rigorously, this refers to the norm in which stability is derived. Per example, stability in the sense of $L^2$ refers to a stability estimate using $L^2$ norms:
\begin{align*}
	\norm{f(t)}_{L^2} \leq M \norm{f(t_0)}_{L^2}
\end{align*}

Elements of a matrix $A$ are denoted with lowercase $a_{ij}$, with the subscripts defining the $i$-th row and $j$-th column.

\subsection{Model and problem formulation}
We consider the following reaction-diffusion PDE for $u$ on the domain $[0,\infty) \times (-1,1)$:
\begin{align}
	\partial_t \bar{u}(y,t) &= \varepsilon \partial_{y}^2 \bar{u}(y,t) + \nu(y) \partial_y \bar{u}(y,t) + \bar{\lambda}(y) \bar{u}(y,t) \label{eq:model_orgadv_1} \\
	\bar{u}(-1,t) &= \bar{\mathcal{U}}_1(t) \label{eq:model_orgadv_2}\\
 	\bar{u}(1,t) &= \bar{\mathcal{U}}_2(t) \label{eq:model_orgadv_3}
\end{align}
It is assumed that $\varepsilon > 0$ for well-posedness, and $\nu, \bar{\lambda} \in C^1((-1,1))$. The controllers operate at $x = 1$ and $x = -1$, and are denoted $\bar{\mathcal{U}}_1(t),\bar{\mathcal{U}}_2(t)$, respectively. We define the following transformation:
\begin{align}
	u(y,t) &= \exp \left( \int_{-1}^y \frac{\nu(z)}{2 \varepsilon} dz \right) \bar{u}(y,t) \label{eq:model_adv_transform}
\end{align}
and with the appropriate parameter definitions, we find the equivalent system
\begin{align}
	\partial_t u(y,t) &= \varepsilon \partial_{y}^2 u(y,t) + \lambda(y) u(y,t) \label{eq:model_org_1} \\
	u(-1,t) &= \bar{\mathcal{U}}_1(t) =: \mathcal{U}_1(t) \label{eq:model_org_2}\\
 	u(1,t) &= \exp \left( \int_{-1}^{1} \frac{\nu(z)}{2 \varepsilon} dz \right) \bar{\mathcal{U}}_2(t) =: \mathcal{U}_2(t) \label{eq:model_org_3}
\end{align}
The transformation \eqref{eq:model_adv_transform} removes the advection/convection term in \eqref{eq:model_orgadv_1}. The attenuation/amplification of control effort in the controllers matches intuition -- the controller upstream of the ``average'' convection requires less control effort, while the controller downstream requires more control effort (average, as the sign of $\nu$ can vary across the domain). In this paper, we will assume that $\nu = 0$, but in general, the methodology can compensate convection phenomena.

\section{Output-feedback control design}
\label{sec:controller}
The output feedback is designed via solving two subproblems: the state-feedback design, and state-estimator design. The output-feedback result is then recovered by replacing the state-feedback control law with the state-estimate, and the resulting stability of the interconnected systems is proven.

\subsection{Model transformation for control via folding}
The folding approach entails selecting a point $y_0 \in (-1,1)$ in which the scalar parabolic PDE system $u$ is ``folded'' into a $2 \times 2$ coupled parabolic system. A special case $y_0 = 0$ (dividing into a symmetric problem) recovers the result of \cite{vazquez2016bilateral}. We define the the folding spatial transformations as
\begin{align}
	x &= (y_0 - y)/(1 + y_0) & y \in (-1,y_0) \\
	x &= (y - y_0)/(1 - y_0) & y \in (y_0,1)
\end{align}
admits the following states:
\begin{align}
	U(x,t) &:= \begin{pmatrix} u_1(x,t) \\ u_2(x,t) \end{pmatrix} = \begin{pmatrix} u(y_0 - (1+y_0)x,t) \\ u(y_0 + (1-y_0)x,t) \end{pmatrix} \label{eq:fold_tfm}
\end{align}
whose dynamics are governed by the following system:
\begin{align}
	\partial_t U(x,t) &= E \partial_{x}^2 U(x,t) + \Lambda(x) U(x,t) \label{eq:model_fold_1} \\
	\alpha U_x(0,t) &= -\beta U(0,t) \label{eq:model_fold_2}\\
	U(1,t) &= \mathcal{U}(t) \label{eq:model_fold_3}
\end{align}
with the parameters given by :
\begin{align}
	E &:= \text{diag}(\varepsilon_1,\varepsilon_2) \nonumber\\ &:= \text{diag}\left(\frac{\varepsilon}{(1+y_0)^2},\frac{\varepsilon}{(1-y_0)^2}\right) \\
	\Lambda(x) &:= \text{diag}(\lambda_1(x),\lambda_2(x)) \nonumber\\ &:= \text{diag}(\lambda(y_0 - (1+y_0)x), \lambda(y_0 + (1-y_0)x)) \\
	\alpha &:= \begin{pmatrix} 1 & a \\ 0 & 0 \end{pmatrix} \\
	\beta &:= \begin{pmatrix} 0 & 0 \\ 1 & -1 \end{pmatrix} \\
	a &:= (1+y_0)/(1-y_0) \label{eq:a_def}
\end{align}
The boundary conditions at $x = 0$ are curious. While they may initially appear to be encapsulated as Robin boundary conditions in \eqref{eq:model_fold_2}, they are actually compatibility conditions that arise from imposing continuity in the solution at the folding point. Analogous conditions have been considered in some previous parabolic backstepping work in \cite{tsubakino2013boundary}, albeit in a differing context.

\begin{assum}
	The folding point $y_0$ is constricted to the half domain $(-1,0]$ without loss of generality. The case $y_0 \in [0,1)$ can be recovered by using a change in spatial variables $\hat{y} = -y$ and performing the same folding technique. By choosing $y_0$ in this manner, we impose an ordering $\varepsilon_1 > \varepsilon_2$.
	\label{assum:order}
\end{assum}

\begin{figure}[tb]
	  \centering
		\includegraphics[width=\linewidth]{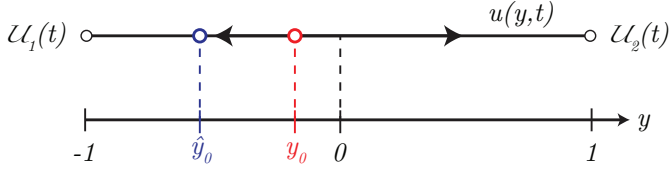}
		\caption{System schematic of diffusion-reaction equation with two boundary inputs. The control folding point $y_0$ and the measurement location $\hat{y}_0$ can be arbitrarily chosen on the interior, independent of one another.}
		\label{fig:system_schem}
\end{figure}

\subsection{State-feedback design}
The backstepping state-feedback control design is accomplished with two consecutive backstepping transformations. The first transformation is a $2 \times 2$ Volterra integral transformation of the second kind:
\begin{align}
	W(x,t) &= U(x,t) - \int_0^x K(x,y) U(y,t) dy \label{eq:tfm1}
\end{align}
where $K(x,y) \in C^2(\mathcal{T})$ is a $2 \times 2$ matrix of kernel elements $(k_{ij})$, with $\mathcal{T} := \{ (x,y) \in \R^2 | 0 \leq y \leq x \leq 1 \}$, and $W(x,t) := \begin{pmatrix} w_1(x,t) & w_2(x,t)\end{pmatrix}^T$. The inverse transformation is analogous:
\begin{align}
	U(x,t) &= W(x,t) - \int_0^x \bar{K}(x,y) W(y,t) dy \label{eq:tfm1inv}
\end{align}
The corresponding target system for \eqref{eq:tfm1} is chosen to be
\begin{align}
	\partial_t W(x,t) &= E \partial_x^2 W(x,t) - C W(x,t) + G[K](x) W(x,t)  \label{eq:targ_int_1} \\
	\alpha \partial_x W(0,t) &= -\beta W(0,t) \label{eq:targ_int_2}\\
	W(1,t) &= \mathcal{V}(t) \label{eq:targ_int_3}
\end{align}
where $\mathcal{V}(t) = \begin{pmatrix} 0 & \nu_2(t) \end{pmatrix}^T$ is an auxiliary control which is designed later in the paper. The controller $\mathcal{U}(t)$ can be expressed as an operator of $\mathcal{V}(t)$ by evaluating \eqref{eq:tfm1} for $x = 1$:
\begin{align}
	\mathcal{U}(t) := \mathcal{V}(t) + \int_0^1 K(1,y) U(y,t) dy \label{eq:control_aux}
\end{align}
The matrix $C$ can be arbitrarily chosen such that $C \succ 0$, but for simplicity of analysis, we select a diagonal matrix $C = \text{diag}(c_1,c_2)$ with $c_1,c_2 > 0$. The matrix-valued operator $G[\cdot](x)$ acting on $K$ is given by
\begin{align}
	G[K](x) = \begin{pmatrix} 0 & 0 \\ (\varepsilon_2 - \varepsilon_1) \partial_y k_{21}(x,x) & 0 \end{pmatrix} =: \begin{pmatrix} 0 & 0 \\ g[k_{21}](x) & 0 \end{pmatrix} \label{eq:G_op_def}
\end{align}
Imposing  the conditions \eqref{eq:model_fold_1}, \eqref{eq:model_fold_2}, \eqref{eq:tfm1}, \eqref{eq:targ_int_1}-\eqref{eq:targ_int_3} admits the following companion gain kernel PDE system for $K(x,y)$:
\begin{align}
	E \partial_x^2 K(x,y) - \partial_y^2 K(x,y) E &= K(x,y) \Lambda(y) + C K(x,y)\nonumber\\&\quad - G[K](x) K(x,y) \label{eq:K_mat_PDE}\\
	\partial_y K(x,x) E + E \partial_x K(x,x) &= - E \frac{d}{dx} K(x,x) -\Lambda(x) \nonumber\\&\quad- C + G[K](x) \label{eq:K_xx_bc}\\
	E K(x,x) - K(x,x) E &= 0 \label{eq:K_xtra} \\
	K(x,0) E \partial_x U(0) &= \partial_y K(x,0) E U(0) \label{eq:K_fold}
\end{align}
It is clear to see that by imposing \eqref{eq:K_xtra}, the definition for $G[K](x)$ can be recovered from \eqref{eq:K_xx_bc}
Upon first inspection, the resulting kernel PDE is very similar to those found in \cite{vazquez2016coupledpara},\cite{deutscher2018backstepping}. However, one may see that \eqref{eq:K_fold} is different, and in fact quite new in backstepping designs. \eqref{eq:K_fold} arises due to the folding boundary condition \eqref{eq:model_fold_2}. Surprisingly enough, if one analyzes \eqref{eq:K_fold} componentwise and employs \eqref{eq:model_fold_2}, ``anti-folding'' conditions on $K$ can be recovered, which preserve continuity in the spatial derivative of the state (as opposed to folding conditions preserving continuity in the state). The folding conditions that arise from \eqref{eq:K_fold} are:
\begin{align}
	\varepsilon_1 k_{11}(x,0) - a \varepsilon_2 k_{12}(x,0) &= 0 \\
	\varepsilon_1 \partial_y k_{11}(x,0) + \varepsilon_2 \partial_y k_{12}(x,0) &= 0 \\
	\varepsilon_1 k_{21}(x,0) - a \varepsilon_2 k_{22}(x,0) &= 0 \\
	\varepsilon_1 \partial_y k_{21}(x,0) + \varepsilon_2 \partial_y k_{22}(x,0) &= 0
\end{align}
or, more compactly written,
\begin{align}
	\tilde{\alpha} K(x,0) &= \tilde{\beta} \partial_y K(x,0) \label{eq:K_fold_compact}
\end{align}
where
\begin{align}
	\tilde{\alpha} &:= \begin{pmatrix} 1 & -a \\ 0 & 0 \end{pmatrix}, \quad \tilde{\beta} := \begin{pmatrix} 0 & 0 \\ 1 & 1 \end{pmatrix}
\end{align}
The kernel equations for the inverse kernels $\bar{K}$ are similar to those of $K$, and are derived in an analogous manner:
\begin{align}
	E \partial_x^2 \bar{K}(x,y) - \partial_y^2 \bar{K}(x,y) E &= -\bar{K}(x,y) (C-G[\bar{K}](y))\nonumber\\&\quad - \Lambda(x) \bar{K}(x,y) \label{eq:K_bar_mat_PDE}\\
	\partial_y \bar{K}(x,x) E + E \partial_x \bar{K}(x,x) &= - E \frac{d}{dx} \bar{K}(x,x) +\Lambda(x) \nonumber\\&\quad + C - G[\bar{K}](x) \label{eq:K_bar_xx_bc}\\
	E \bar{K}(x,x) - \bar{K}(x,x) E &= 0 \label{eq:K_bar_xtra} \\
	\bar{K}(x,0) E \partial_x W(0) &= \partial_y \bar{K}(x,0) E W(0) \label{eq:K_bar_fold}
\end{align}

The second transformation is designed to admit an expression for the auxiliary controller $\mathcal{V}(t) = \begin{pmatrix} 0 & \nu_2(t) \end{pmatrix}^T$. The goal of $\nu_2(t)$ is to remove the potentially destabilizing effect of the coupling term $G[K](x)$. The second set of transformations is:
\begin{align}
	\omega_1(x,t) &= w_1(x,t) \\
	\omega_2(x,t) &= w_2(x,t) - \int_0^x \begin{pmatrix} q(x,y) & p(x-y) \end{pmatrix} W(y,t) dy \nonumber\\
	&\qquad\qquad\,\,\,\,- \int_x^1 \begin{pmatrix} r(x,y) & 0 \end{pmatrix} W(y,t) dy \label{eq:tfm2}
\end{align}
Let $\Omega(x,t) := \begin{pmatrix} \omega_1(x,t) & \omega_2(x,t)\end{pmatrix}^T$. The inverse transformations are given by
\begin{align}
	w_1(x,t) &= \omega_1(x,t) \\
	w_2(x,t) &= \omega_2(x,t) - \int_0^x \begin{pmatrix} \bar{q}(x,y) & \bar{p}(x-y) \end{pmatrix} \Omega(y,t) dy \nonumber\\
	&\qquad\qquad\,\,\,\,- \int_x^1 \begin{pmatrix} \bar{r}(x,y) & 0 \end{pmatrix} \Omega(y,t) dy \label{eq:tfm2inv}
\end{align}
We impose the following target system dynamics:
\begin{align}
	\partial_t \Omega(x,t) &= E \partial_x^2 \Omega(x,t) - C \Omega(x,t) \label{eq:targ2_int_1} \\
	\alpha \partial_x \Omega(0,t) &= -\beta \Omega(0,t) \label{eq:targ2_int_2}\\
	\Omega(1,t) &= 0 \label{eq:targ2_int_3}
\end{align}
Noting that $G[K](x)$ is parametrized by the difference in diffusion coefficients $\varepsilon_1 - \varepsilon_2$, one can interpret \eqref{eq:tfm2} to be the \emph{correction} factor to the first transformation in presence of selecting a non-trivial folding point. Indeed, when the folding point is chosen to be the midpoint, $G[K](x) \equiv 0$ (and therefore \eqref{eq:tfm2} becomes an identity transformation). This necessity for correction factors is to compensate for the behavior unique to bilateral control design in parabolic PDE, and is not observed in the results featuring bilateral control design of hyperbolic PDE systems \cite{auriol2018two}.

The transformation \eqref{eq:tfm2} features two major components -- a Volterra integral operator in $w_2$ characterized by kernel $p$, a Volterra integral operator in $w_1$ characterized by kernel $q$, and an \emph{forwarding} type of transformation in $w_1$ characterized by kernel $r$. The kernels $p,q$ are defined on the domain $\mathcal{T}$, while $r$ is defined on the domain $\mathcal{T}_\textrm{u} := \{ (x,y) \in \R^2 | 0 \leq x \leq y \leq 1 \}$.

The transformation \eqref{eq:tfm2} with the conditions \eqref{eq:targ_int_1}-\eqref{eq:targ_int_3},\eqref{eq:targ2_int_1}-\eqref{eq:targ2_int_3} imposed will admit the following definition for $p$ and kernel PDE for $q$:
\begin{align}
	p(x) &= a^{-1} q(x,0) \label{eq:p_orig} \\
	\varepsilon_2 \partial_x^2 q(x,y) - \varepsilon_1 \partial_y^2 q(x,y) &= (c_2 - c_1) q(x,y) \nonumber\\&\quad+  g[k_{21}](y) p(x-y) \label{eq:q_orig} \\
	\varepsilon_2 \partial_x^2 r(x,y) - \varepsilon_1 \partial_y^2 r(x,y) &= (c_2 - c_1) r(x,y)
\end{align}
subject to the following boundary conditions:
\begin{align}
	\partial_y q(x,x) &= \partial_y r(x,x) + \frac{g[k_{21}](x)}{\varepsilon_2 - \varepsilon_1} \label{eq:pq_if_d}\\
	r(x,x) &= q(x,x) \label{eq:pq_if} \\
	\partial_y q(x,0) &= a^2 p'(x) = a \partial_x q(x,0) \label{eq:pq_bc} \\
	r(x,1) &= 0
\end{align}
In addition, two initial conditions on $r$ can be found from enforcing \eqref{eq:targ_int_2} on \eqref{eq:tfm2}:
\begin{align}
	r(0,y) &= 0 \\
	\partial_x r(0,y) &= \mathbf{1}_{\{y = 0\}}(y) \frac{g[k_{21}](0)}{\varepsilon_2 - \varepsilon_1}
\end{align}
where $\mathbf{1}_{\{y = 0\}}(y)$ is the indicator function equal to 1 on the set $\{y = 0\}$ and 0 otherwise. This is quite unusual when compared to standard backstepping techniques, but is necessary to resolve the condition \eqref{eq:pq_if_d},\eqref{eq:pq_if} at $x = y = 0$. Despite having this unusual initial condition, the target system and gain kernels are unaffected as $\partial_x r(0,y)$ only appears underneath an integration operation.
The kernel equations for the inverse kernels $\bar{p},\bar{q},\bar{r}$ are similar to those of $p,q$ respectively:
\begin{align}
	\bar{p}(x) &= a^{-1} \bar{q}(x,0) \\
	\varepsilon_2 \partial_x^2 \bar{q}(x,y) - \varepsilon_1 \partial_y^2 \bar{q}(x,y) &= (c_1 - c_2) \bar{q}(x,y) \nonumber\\&\quad- g[k_{21}](x) \bar{p}(x-y) \\
	\varepsilon_2 \partial_x^2 \bar{r}(x,y) - \varepsilon_1 \partial_y^2 \bar{r}(x,y) &= (c_2 - c_1) \bar{r}(x,y)
\end{align}
with boundary conditions
\begin{align}
	\partial_y \bar{q}(x,x) &= \partial_y \bar{r}(x,x) + \frac{g[k_{21}](x)}{\varepsilon_1 - \varepsilon_2} \\
	\bar{r}(x,x) &= \bar{q}(x,x) \\
	\partial_y \bar{q}(x,0) &= -a^2 \bar{p}'(x) = -a \partial_x \bar{q}(x,0) \label{eq:pqbar_bc}
\end{align}
As in the forward transformation, initial conditions on $\bar{r}$ can be found:
\begin{align}
	r(0,y) &= 0 \\
	\partial_x r(0,y) &= \mathbf{1}_{\{y = 0\}}(y) \frac{g[k_{21}](0)}{\varepsilon_1 - \varepsilon_2}
\end{align}
An interpretation of the $(p,q,r)$ coupled kernel is that of a hyperbolic PDE $(q,r)$ defined on the square $\mathcal{T} \cup \mathcal{T}_\textrm{u}$, subject to non-local coupling and memory phenomena via $p$. Transmission conditions between $q,r$ exist at the interface $y=x$, where operator $g[k_{21}](x)$ acts as a point forcing through the interface. This interpretation will motivate the well-posedness study of $(p,q,r)$. A point of interest to be raised is on the postulated continuity of the solutions. From \eqref{eq:pq_if}, continuity is imposed between $q,r$, however, due to \eqref{eq:pq_if_d}, the partial derivatives will not exhibit the same property. One may expect piecewise differentiability, in which the derivative loses continuity at the interface $y = x$.

\begin{lemma}
	\label{lem:stability_targtfm}
	The trivial solution $\Omega \equiv 0$ of the target system \eqref{eq:targ2_int_1}-\eqref{eq:targ2_int_3} is exponentially stable in the sense of the $L^2$ norm. That is,
	\begin{align}
		\norm{\Omega(\cdot,t)}_{L^2} \leq \Pi \exp \left( -\gamma(t-t_0) \right) \norm{\Omega(\cdot,t_0)}_{L^2} \label{eq:target_bound}
	\end{align}
	where the constants $\Pi, \gamma$ are given by
	\begin{align}
		\Pi &= a^{-\frac{3}{2}} \\
		\gamma &= \min \{ a^3 c_1, c_2 \} + \frac{\varepsilon_2}{4}
	\end{align}
\end{lemma}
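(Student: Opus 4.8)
The plan is to exhibit a weighted $L^2$ Lyapunov functional whose dissipation rate and norm-equivalence constants reproduce $\gamma$ and $\Pi$ \emph{exactly}. Guided by the diffusion-coefficient identity $a^2\varepsilon_1 = \varepsilon_2$ (immediate from the definitions of $E$ and $a$ in \eqref{eq:a_def}) and by the target value $\Pi = a^{-3/2}$, I would take
\begin{align*}
	V(t) = \frac{1}{2}\int_0^1 \left( a^3\,\omega_1^2(x,t) + \omega_2^2(x,t) \right) dx .
\end{align*}
Since $a \in (0,1]$ under Assumption \ref{assum:order}, the weights satisfy $\tfrac{a^3}{2}\norm{\Omega(\cdot,t)}_{L^2}^2 \le V(t) \le \tfrac{1}{2}\norm{\Omega(\cdot,t)}_{L^2}^2$, so any differential inequality of the form $\dot V \le -2\gamma V$ integrates (Gr\"onwall) to $V(t) \le e^{-2\gamma(t-t_0)}V(t_0)$ and then, through the norm equivalence, to \eqref{eq:target_bound} with precisely $\Pi = \sqrt{1/a^3} = a^{-3/2}$. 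The whole problem therefore reduces to proving $\dot V \le -2\gamma V$.

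Differentiating $V$ along \eqref{eq:targ2_int_1} and integrating by parts twice in $x$, the Dirichlet endpoint $x=1$ contributes nothing because $\Omega(1,t)=0$ from \eqref{eq:targ2_int_3}, leaving (after using $a^3\varepsilon_1 = a\varepsilon_2$)
\begin{align*}
	\dot V &= B - a\varepsilon_2\norm{\partial_x\omega_1}_{L^2}^2 - \varepsilon_2\norm{\partial_x\omega_2}_{L^2}^2 \\
	&\quad - a^3 c_1\norm{\omega_1}_{L^2}^2 - c_2\norm{\omega_2}_{L^2}^2 ,
\end{align*}
where $B := -a^3\varepsilon_1\,\omega_1(0,t)\partial_x\omega_1(0,t) - \varepsilon_2\,\omega_2(0,t)\partial_x\omega_2(0,t)$ collects the folding endpoint $x=0$. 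The reaction terms are handled directly: because $a\le 1$ forces $m := \min\{a^3 c_1, c_2\} \le a^3 c_1 \le c_1$ and $m \le c_2$, one has $-a^3 c_1\norm{\omega_1}_{L^2}^2 - c_2\norm{\omega_2}_{L^2}^2 \le -m\left(a^3\norm{\omega_1}_{L^2}^2 + \norm{\omega_2}_{L^2}^2\right) = -2mV$, which is exactly the $\min\{a^3 c_1, c_2\}$ piece of $\gamma$.

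It remains to extract the residual $\varepsilon_2/4$ from the diffusion while neutralizing $B$, and this is where I expect the real work. Substituting the two folding boundary conditions encoded in \eqref{eq:targ2_int_2} — namely the continuity $\omega_1(0,t)=\omega_2(0,t)$ and the flux matching $\partial_x\omega_1(0,t) = -a\,\partial_x\omega_2(0,t)$ — collapses $B$ to the single cross-term $B = \varepsilon_2(a^2-1)\,\omega_2(0,t)\,\partial_x\omega_2(0,t)$. This term is \emph{indefinite}: it vanishes only in the symmetric case $a=1$ (recovering \cite{vazquez2016bilateral}), and for $a<1$ it must be dominated by a fraction of the dissipation $-\varepsilon_2\norm{\partial_x\omega_2}_{L^2}^2$, reserving the remainder (via a Young/Poincar\'e estimate on $\omega_2$, which satisfies $\omega_2(1,t)=0$) to furnish the net state-decay rate $\varepsilon_2/4$. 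The main obstacle is that $B$ contains the \emph{derivative trace} $\partial_x\omega_2(0,t)$, which is not controlled by $\norm{\partial_x\omega_2}_{L^2}$ alone; I would close this gap by invoking the parabolic smoothing of the target system (its solutions lie in $H^2$ for $t>t_0$, guaranteed by the invertibility of the backstepping maps \eqref{eq:tfm1}, \eqref{eq:tfm2}), so that the trace is well-defined and absorbable, at the cost of the deliberately conservative $\varepsilon_2/4$ rather than the sharper Poincar\'e rate. As an independent sanity check I would note that ``unfolding'' $\Omega$ back into a single scalar field on $(-1,1)$ converts \eqref{eq:targ2_int_1}--\eqref{eq:targ2_int_3} into a diffusion--reaction equation with homogeneous Dirichlet data and a $C^1$-matching interface at $y_0$, a manifestly exponentially stable system, which corroborates the estimate.
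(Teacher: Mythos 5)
You follow exactly the paper's suggested route (the paper's sketch prescribes the weight $\text{diag}(a^3,1)$, which is your $V$ up to a factor of $\tfrac12$), and your computations are correct up to and including the reduction of the endpoint contribution to the single interface term $B = \varepsilon_2(a^2-1)\,\omega_2(0,t)\,\partial_x\omega_2(0,t)$. The gap is your final step: $B$ \emph{cannot} be dominated by a fraction of the dissipation, and parabolic smoothing does not repair this. Smoothing only guarantees that the trace $\partial_x\omega_2(0,t)$ is finite at each instant; what you actually need is a functional inequality of the form
\begin{align*}
	\left|\omega_2(0,t)\,\partial_x\omega_2(0,t)\right| \leq \delta \norm{\partial_x\omega_2(\cdot,t)}_{L^2}^2 + C_\delta \norm{\omega_2(\cdot,t)}_{L^2}^2 ,
\end{align*}
and no such inequality exists: a profile whose derivative has a spike of slope $N$ on an interval of length $N^{-2}$ near $x=0$ keeps $\omega_2(0)$, $\norm{\omega_2}_{L^2}$, and $\norm{\partial_x\omega_2}_{L^2}$ bounded while $\partial_x\omega_2(0) = -N \rightarrow -\infty$. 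Since such profiles are admissible data for the target system, the differential inequality $\dot V \leq -2\gamma V$ is never established and the argument does not close. The obstruction is intrinsic to the weight: for a general $A = \text{diag}(m_1,m_2)$ the interface term equals $(m_1 a \varepsilon_1 - m_2 \varepsilon_2)\,\omega_2(0)\partial_x\omega_2(0)$, which (using $\varepsilon_2 = a^2\varepsilon_1$) vanishes if and only if $m_1/m_2 = a$ --- never for the ratio $a^3$ when $a<1$.

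The repair is contained in your own ``sanity check'': unfolding $\tfrac12\int_{-1}^{1}\omega^2\,dy$ back to folded coordinates produces precisely the weight ratio $a$, so take instead
\begin{align*}
	V(t) = \frac{1}{2}\int_0^1 \left( a\,\omega_1^2(x,t) + \omega_2^2(x,t)\right) dx ,
\end{align*}
for which the interface term cancels identically (this cancellation is the folded image of the $C^1$ matching at $y_0$). Then
\begin{align*}
	\dot V = -a\varepsilon_1 \norm{\partial_x\omega_1}_{L^2}^2 - \varepsilon_2 \norm{\partial_x\omega_2}_{L^2}^2 - a c_1 \norm{\omega_1}_{L^2}^2 - c_2 \norm{\omega_2}_{L^2}^2 ,
\end{align*}
and the Poincar\'e bound $\norm{\partial_x\omega_i}_{L^2}^2 \geq \tfrac14 \norm{\omega_i}_{L^2}^2$ (valid since $\omega_i(1,t)=0$) together with $\varepsilon_1 > \varepsilon_2$ gives $\dot V \leq -2\gamma' V$ with $\gamma' = \min\{c_1,c_2\} + \varepsilon_2/4 \geq \gamma$. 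The equivalence $\tfrac{a}{2}\norm{\Omega}_{L^2}^2 \leq V \leq \tfrac12 \norm{\Omega}_{L^2}^2$ then yields \eqref{eq:target_bound} with overshoot $a^{-1/2} \leq a^{-3/2} = \Pi$ and rate $\gamma' \geq \gamma$, i.e., a bound that implies the lemma's stated (weaker) constants.
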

We omit the proof for space, but the result follows in a straightforward manner using the following Lyapunov function:
\begin{align}
		V(t) &:= \int_0^1 \Omega(x,t)^T A \Omega(x,t) dx
	\end{align}
	where $A = \text{diag}(a^3,1)$.

With Lemma \ref{lem:stability_targtfm}, we are equipped to establish state feedback result.
\begin{theorem}
The trivial solution of the system \eqref{eq:model_orgadv_1}-\eqref{eq:model_orgadv_3} is exponentially stable in the sense of the $L^2$ norm under the pair of state feedback control laws $\bar{\mathcal{U}}_1,\bar{\mathcal{U}}_2$:
\begin{align}
	\begin{pmatrix} \bar{\mathcal{U}}_1(t) \\ \bar{\mathcal{U}}_2(t) \end{pmatrix} &= \int_{-1}^1 \begin{pmatrix} F_1(y) \\ F_2(y) \end{pmatrix} \bar{u}(y,t) dy \label{eq:state_feedback_controls}
\end{align}
with feedback gains $F_1,F_2$ defined as
\begin{align}
	F_1(y) &= \begin{cases} (1+y_0)^{-1} k_{11} \left(1,\frac{y_0-y}{1+y_0}\right) & y \leq y_0 \\
													(1-y_0)^{-1} k_{12} \left(1,\frac{y-y_0}{1-y_0}\right) & y > y_0 \end{cases}\label{eq:gain_f1}\\
	F_2(y) &= \begin{cases} (1+y_0)^{-1} h_{1}\left(\frac{y_0-y}{1+y_0}\right) & y \leq y_0 \\
													(1-y_0)^{-1} h_{2}\left(\frac{y-y_0}{1-y_0}\right) & y > y_0 \end{cases}\label{eq:gain_f2}\\
	h_1(y) &= k_{21}(1,y) + q(1,y) - \int_y^1 \bigl[p(1-z) k_{21}(z,y) \nonumber\\&\qquad + q(1,z) k_{11}(z,y) \bigr] dz \\
	h_2(y) &= k_{22}(1,y) + p(1-y) - \int_y^1 \bigl[p(1-z) k_{22}(z,y) \nonumber\\&\qquad+ q(1,z)k_{12}(z,y) \bigr] dz
\end{align}
where $k_{ij},p,q$ are $C^2(\mathcal{T})$ solutions to the kernel equations \eqref{eq:K_mat_PDE},\eqref{eq:p_orig},\eqref{eq:q_orig} respectively (with associated boundary conditions). That is, under the controllers $\bar{\mathcal{U}}_1(t),\bar{\mathcal{U}}_2(t)$, there exists a constant $\bar{\Pi}$ such that
	\begin{align}
		\norm{\bar{u}(\cdot,t)}_{L^2} \leq \bar{\Pi} \exp \left( -\gamma(t-t_0) \right) \norm{\bar{u}(\cdot,t_0)}_{L^2} \label{eq:state_feedback_bound}
	\end{align}
	\label{thm:statefdback}
\end{theorem}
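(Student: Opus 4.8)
The plan is to obtain \eqref{eq:state_feedback_bound} by transporting the exponential decay of the final target system $\Omega$, already established in Lemma~\ref{lem:stability_targtfm}, backward through the chain of invertible maps
\[
\bar u \;\longleftrightarrow\; u \;\longleftrightarrow\; U \;\longleftrightarrow\; W \;\longleftrightarrow\; \Omega,
\]
namely the advection-removing gauge \eqref{eq:model_adv_transform}, the folding map \eqref{eq:fold_tfm}, the first Volterra transformation \eqref{eq:tfm1}/\eqref{eq:tfm1inv}, and the second correction transformation \eqref{eq:tfm2}/\eqref{eq:tfm2inv}. Each link is boundedly invertible on $L^2$, so exponential decay of one variable is equivalent to exponential decay of any other, with the rate $\gamma$ preserved and only the overshoot constant accumulating. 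The two things to verify are therefore (i) that the stated feedback laws actually place $\Omega$ into the target system \eqref{eq:targ2_int_1}--\eqref{eq:targ2_int_3}, and (ii) that every link is a two-sided $L^2$ bound.

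For (i) I would confirm that the explicit gains \eqref{eq:gain_f1}--\eqref{eq:gain_f2} are exactly the boundary trace of the composed transformations written back in the original coordinates. By construction of the kernel equations, the maps $U\mapsto W\mapsto\Omega$ carry the plant interior and the folding/interface conditions into the target interior and \eqref{eq:targ2_int_2}, so only the controlled boundary $x=1$ must be checked. Evaluating \eqref{eq:tfm2} at $x=1$ annihilates the forwarding integral (empty domain $\int_1^1$), so imposing $\Omega(1,t)=0$ yields $w_1(1,t)=0$ together with $\nu_2(t)=w_2(1,t)=\int_0^1\begin{pmatrix} q(1,y) & p(1-y)\end{pmatrix}W(y,t)\,dy$. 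Inserting $\mathcal V=(0,\nu_2)^T$ into \eqref{eq:control_aux} and eliminating $W$ via $W=U-\int_0^x K U$ produces $\mathcal U$ as an explicit integral operator on $U$; the $\int_y^1[\cdot]\,dz$ terms in $h_1,h_2$ are precisely the contributions of expanding $w_1,w_2$ in $u_1,u_2$, while the leading $k_{2j}(1,y)$, $q(1,y)$, $p(1-y)$ terms are the direct $K$- and $\nu_2$-contributions. Undoing the folding \eqref{eq:fold_tfm} by splitting the $y$-domain at $y_0$ (Jacobian factors $(1\pm y_0)^{-1}$) and the gauge \eqref{eq:model_adv_transform} (the identity here, since $\nu=0$) reproduces \eqref{eq:state_feedback_controls} exactly, so under these controllers $\Omega$ genuinely solves the target system.

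For (ii) each link is a two-sided $L^2$ equivalence. A change of variables in the folding map gives $\norm{u(\cdot,t)}_{L^2(-1,1)}^2=(1+y_0)\norm{u_1(\cdot,t)}_{L^2}^2+(1-y_0)\norm{u_2(\cdot,t)}_{L^2}^2$, so $\norm{u(\cdot,t)}_{L^2}$ and $\norm{U(\cdot,t)}_{L^2}$ are equivalent with constants depending only on $y_0$; the gauge factor in \eqref{eq:model_adv_transform} is bounded above and below on the compact interval (trivially unity when $\nu=0$), giving equivalence of $\norm{\bar u}$ and $\norm{u}$. Since $K,\bar K\in C^2(\mathcal T)$ are bounded, \eqref{eq:tfm1}/\eqref{eq:tfm1inv} with Cauchy--Schwarz yield $\norm{W(\cdot,t)}_{L^2}\le M_1\norm{U(\cdot,t)}_{L^2}$ and $\norm{U(\cdot,t)}_{L^2}\le M_2\norm{W(\cdot,t)}_{L^2}$; identically, boundedness of $p,q,r,\bar p,\bar q,\bar r$ (from the well-posedness of Section~\ref{sec:wellposed}) gives $\norm{\Omega}\le M_3\norm{W}$ and $\norm{W}\le M_4\norm{\Omega}$.

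Composing these inequalities with Lemma~\ref{lem:stability_targtfm} yields \eqref{eq:state_feedback_bound} with $\bar\Pi$ equal to the product of all equivalence constants times $\Pi=a^{-3/2}$, and with the rate $\gamma$ unchanged. The main obstacle is the second transformation: because \eqref{eq:tfm2} is nonstandard---carrying a forwarding kernel $r$, a nonlocal memory term $p$, and the singular initial datum $\partial_x r(0,y)$---establishing that it is boundedly invertible on $L^2$, and that composing it with the first transformation reproduces \emph{exactly} the gains $h_1,h_2$, is the delicate step; both rest squarely on the kernel existence and $C^2$ regularity deferred to Section~\ref{sec:wellposed}.
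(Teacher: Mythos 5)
Your proposal is correct and follows essentially the same route as the paper's proof: derive the explicit gains by evaluating \eqref{eq:tfm1} and \eqref{eq:tfm2} at $x=1$, substituting $\mathcal{V}$ into \eqref{eq:control_aux}, exchanging the order of the nested integrals, and undoing the folding; then combine the two-sided $L^2$ equivalences induced by the bounded kernels $K,\bar K,p,q,r,\bar p,\bar q,\bar r$ (guaranteed by Lemmas \ref{lem:wellposedk1}, \ref{lem:wellposedk2}, \ref{lem:wellposedqr}) with the decay estimate of Lemma \ref{lem:stability_targtfm}. Your treatment is in fact slightly more complete than the paper's, since you make explicit the folding Jacobian identity $\norm{u}_{L^2(-1,1)}^2=(1+y_0)\norm{u_1}_{L^2}^2+(1-y_0)\norm{u_2}_{L^2}^2$ and the gauge-factor equivalence, which the paper leaves implicit.
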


\begin{proof}
	The feedback controllers \eqref{eq:state_feedback_controls} are derived via evaluating transforms \eqref{eq:tfm1},\eqref{eq:tfm2} at the boundary $x = 1$. From \eqref{eq:tfm2}:
	\begin{align}
		W(1,t) &= \mathcal{V}(t) = \begin{pmatrix} 0 \\ \nu_2(t) \end{pmatrix} \nonumber\\&= \begin{pmatrix} 0 \\ \int_0^1 \begin{pmatrix} q(1,y) & p(1-y) \end{pmatrix} W(y,t) dy \end{pmatrix}
	\end{align}
	From \eqref{eq:tfm1},\eqref{eq:control_aux}:
	\begin{align}
		\mathcal{U}(t) &= \int_0^1 K(1,y) U(y,t) dy + \mathcal{V}(t)
	\end{align}
	Or componentwise,
	\begin{align}
		\mathcal{U}_1(t) &= \int_0^1 \begin{pmatrix} k_{11}(1,y) & k_{12}(1,y) \end{pmatrix} U(y,t) dy \\
		\mathcal{U}_2(t) &= \int_0^1 \begin{pmatrix} k_{21}(1,y) & k_{22}(1,y) \end{pmatrix} U(y,t) dy \nonumber\\
		&\quad + \int_0^1 \begin{pmatrix} q(1,y) & p(1-y) \end{pmatrix} U(y,t) dy \nonumber\\
		&\quad - \int_0^1 \int_{0}^{y} K(y,z) U(z,t) dz dy
	\end{align}
	By exchanging the order of integrals in the nested integrals, and applying the inverse folding transformations \eqref{eq:fold_tfm}, the controllers \eqref{eq:state_feedback_controls} can be recovered.

	The proof of the bound \eqref{eq:state_feedback_bound} relies on the well-posedness of the kernel PDEs, studied in Section \ref{sec:wellposed}. Specifically, Lemmas \ref{lem:wellposedk1},\ref{lem:wellposedk2},\ref{lem:wellposedqr} state that continuous solutions exist and are unique. By Morrey's inequality (5.6.2 Theorem 4 in \cite{evans1998PDE}), the continuous embedding $C^1(\mathcal{T}) \subseteq L^2(\mathcal{T})$ holds, and therefore $C^1(\mathcal{T})$ functions possess a bounded $L^2(\mathcal{T})$ norm. The boundedness (in $L^2(\mathcal{T})$) of the kernels $K,p,q,r$ (and their inverses $\bar{K},\bar{p},\bar{q},\bar{r}$, via the bounded inverse theorem) are required, but can be shown from their continuity properties on the compact sets $\mathcal{T},\mathcal{T}_{\textrm{u}}$.

	From \eqref{eq:tfm2}, \eqref{eq:tfm2inv}, one can derive the following equivalence:
	\begin{align}
		M_1^{-1} \norm{W(\cdot,t)}_{L^2}^2 \leq \norm{\Omega(\cdot,t)}_{L^2}^2 \leq \bar{M}_1 \norm{W(\cdot,t)}_{L^2}^2 \label{eq:tfm2equiv}
	\end{align}
	where the coefficient $M_1$ depends on kernels $p,q,r$, and is given by
	\begin{align}
		M_1 &= \left(1 - \norm{q}_{L^2} - \norm{p}_{L^2} - \norm{r}_{L^2}\right)^2
	\end{align}
	and $\bar{M}_1$ is analogous with inverse kernels $\bar{p},\bar{q},\bar{r}$. Similarly, from \eqref{eq:tfm1},\eqref{eq:tfm1inv}, the following equivalence can be derived:
	\begin{align}
		\bar{M}_2^{-1} \norm{U(\cdot,t)}_{L^2}^2 \leq \norm{W(\cdot,t)}_{L^2}^2 \leq M_2 \norm{U(\cdot,t)}_{L^2}^2 \label{eq:tfm1equiv}
	\end{align}
	with
	\begin{align}
		M_2 &= (1 - \norm{K}_{L^2})^2
	\end{align}
	and $\bar{M}_2$ analogous with inverse kernel $\bar{K}$. Then, applying \eqref{eq:tfm2equiv},\eqref{eq:tfm1equiv} to the bound \eqref{eq:target_bound} in Lemma \ref{lem:stability_targtfm}, one can arrive at \eqref{eq:state_feedback_bound}, with
	\begin{align}
		\bar{\Pi} = \left(\sqrt{M_1 \bar{M}_1 M_2 \bar{M}_2}\right) \Pi
	\end{align}

\end{proof}

\subsection{Model transformation for estimation via folding}
In the state-estimation problem, we tackle the related problem to the state-feedback problem. Rather than the controllers existing at either opposing boundary, we establish a problem which has two collocated measurements (of state and flux) in the interior of the PDE at some point $\hat{y}_0$. We also note that the sensor location $\hat{y}_0$ need not to be chosen equal to the control folding point $y_0$. The output, denoted $\mathcal{Y}$, is formulated as
\begin{align}
	\mathcal{Y}(t) = \begin{pmatrix} u(\hat{y}_0,t) \\ \partial_y u(\hat{y}_0,t) \end{pmatrix}
\end{align}

Much like the control case, applying a folding transformation about $\hat{y}_0$ will recover a coupled parabolic system. The transformation
\begin{align}
	\hat{x} &= (\hat{y}_0 - y)/(1 + \hat{y}_0) & y \in (-1,\hat{y}_0) \\
	\hat{x} &= (y - \hat{y}_0)/(1 - \hat{y}_0) & y \in (\hat{y}_0,1)
\end{align}
will admit the following folded states:
\begin{align}
	\check{U}(\hat{x},t) &:= \begin{pmatrix} \check{u}_1(\hat{x},t) \\ \check{u}_2(\hat{x},t) \end{pmatrix} = \begin{pmatrix} u(\hat{y}_0 - (1+\hat{y}_0)\hat{x},t) \\ u(\hat{y}_0 + (1-\hat{y}_0)\hat{x},t) \end{pmatrix}
\end{align}
The evolution of $\check{U}(x,t)$ governed by the following dynamics:
\begin{align}
	\partial_t \check{U}(x,t) &= \check{E} \partial_{x}^2 \check{U}(x,t) + \check{\Lambda}(x) \check{U}(x,t) \label{eq:model_foldobs_1} \\
	\check{\alpha} \partial_x \check{U}(0,t) &= -\beta \check{U}(0,t) \label{eq:model_foldobs_2}\\
	\check{U}(1,t) &= \mathcal{U}(t) \label{eq:model_foldobs_3}
\end{align}
The hat notation on $\hat{x}$ has been dropped for simplicity and the spatial domains are defined within the context of the equation in which it arises. The parameter matrices are then as follows:
\begin{align}
	\check{E} &:= \text{diag}(\check{\varepsilon}_1,\check{\varepsilon}_2) \nonumber\\ &:= \text{diag}\left(\frac{\varepsilon}{(1+\hat{y}_0)^2},\frac{\varepsilon}{(1-\hat{y}_0)^2}\right) \\
	\check{\Lambda}(x) &:= \text{diag}(\check{\lambda}_1(x),\check{\lambda}_2(x)) \nonumber\\ &:= \text{diag}(\lambda(\hat{y}_0 - (1+\hat{y}_0)x), \lambda(\hat{y}_0 + (1-\hat{y}_0)x)) \\
	\check{\alpha} &:= \begin{pmatrix} 1 & \check{a} \\ 0 & 0 \end{pmatrix} \\
	\check{a} &:= (1+\hat{y}_0)/(1-\hat{y}_0)
\end{align}
Certainly, if $\hat{y}_0 = y_0$, then the observation and control folded models are identical.

\subsection{Backstepping state estimator design}
Note that the sensor values in the folded coordinates can be expressed in the following manner:
\begin{align}
	\begin{pmatrix} u(\hat{y}_0,t) \\ \partial_y u(\hat{y}_0,t) \end{pmatrix} &= \begin{pmatrix} u_1(0,t) \\ -(1+y_0)^{-1}\partial_x u_1(0,t) \end{pmatrix} \\ &= \begin{pmatrix} u_2(0,t) \\ (1-y_0)^{-1}\partial_x u_2(0,t) \end{pmatrix}
\end{align}

With the two sensor values collocated at a single point, the design of the state estimator can be uncoupled into two near-identical subproblems. Specifically, we choose the following the estimator structure (indexed by $i \in \{ 1,2\}$):
\begin{align}
	\partial_t \hat{u}_i &= \check{\varepsilon}_i \partial_{x}^2 \hat{u}_i(x,t) + \check{\lambda}_i(x) \hat{u}_i(x,t) \nonumber\\&\quad+ \phi_i(x)\left(\partial_x u_i(0,t) -\partial_x \hat{u}_i(0,t) \right) \label{eq:obs1} \\
	\hat{u}_i(0,t) &= u(0,t) \label{eq:obs3} \\
	\hat{u}_i(1,t) &= \mathcal{U}_i(t) \label{eq:obs4}
\end{align}
We define the error systems of the estimators as $\tilde{u}_i(x,t) := u_i(x,t) - \hat{u}_i(x,t)$. They are governed by
\begin{align}
	\partial_t \tilde{u}_i &= \check{\varepsilon}_i \partial_{x}^2 \tilde{u}_i(x,t) + \check{\lambda}_i(x) \tilde{u}_i(x,t) \nonumber\\&\quad- \phi_i(x) \partial_x \tilde{u}_i(0,t) \label{eq:obserr1} \\
	\tilde{u}_i(0,t) &= 0 \label{eq:obserr3} \\
	\tilde{u}_i(1,t) &= 0 \label{eq:obserr4}
\end{align}
We can then design the $\phi_i(x)$ independently to stabilize trivial solutions $\tilde{u}_i(x,t) \equiv 0$ of the error systems $\tilde{u}_i$. We employ the following pair of backstepping transformations
\begin{align}
	\tilde{w}_i(x,t) &= \tilde{u}_i(x,t) - \int_0^x \Phi_i(x,y) \tilde{u}_i(y,t) dy \label{eq:obstfm}
\end{align}
with the following target systems:
\begin{align}
	\partial_t \tilde{w}_i(x,t) &= \check{\varepsilon}_i \partial_{x}^2 \tilde{w}_i(x,t) - \check{c}_i \tilde{w}_i(x,t) \label{eq:obstarg1} \\
	\tilde{w}_i(0,t) &= 0 \label{eq:obstarg3} \\
	\tilde{w}_i(1,t) &= 0 \label{eq:obstarg4}
\end{align}
The inverse transformations are postulated to be
\begin{align}
	\tilde{u}_i(x,t) &= \tilde{w}_i(x,t) - \int_0^x \bar{\Phi}_i(x,y) \tilde{w}_i(y,t) dy \label{eq:obstfm_inv}
\end{align}
where $\bar{\Phi}_i(x,y)$ will satisfy similar kernel equations to $\Phi$.
\begin{lemma}
	For the choice of coefficients $\check{c}_i > 0, i \in \{1,2\}$, the trivial solutions $(\tilde{w}_i,\tilde{v}_i) \equiv 0$ of the observer error target systems \eqref{eq:obstarg1}-\eqref{eq:obstarg4} are exponentially stable in the $L^2 \times H^1$ sense, that is, there exist coefficients $\check{\Pi}_i, \check{\gamma}_i > 0$ such that for $i \in \{1,2\}$,
	\begin{align}
		\norm{\tilde{w}_i(\cdot,t)}_{L^2} &\leq \check{\Pi}_i \exp\left(-\check{\gamma}_i(t-t_0)\right) \norm{\tilde{w}_i(\cdot,t_0)}_{L^2} \label{eq:obstarg_err_bound}
	\end{align}
	\label{lem:obstarg_stab}
\end{lemma}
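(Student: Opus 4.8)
The plan is to run a standard Lyapunov (energy) argument, since the target system \eqref{eq:obstarg1}-\eqref{eq:obstarg4} is nothing more than a damped heat equation with homogeneous Dirichlet data at both endpoints. For each $i \in \{1,2\}$ I would introduce the candidate functional
\begin{align}
	V_i(t) := \frac{1}{2} \norm{\tilde{w}_i(\cdot,t)}_{L^2}^2 = \frac{1}{2}\int_0^1 \tilde{w}_i(x,t)^2 \, dx
\end{align}
and differentiate it along classical solutions of the target dynamics.

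First I would substitute \eqref{eq:obstarg1} and integrate the diffusion term by parts once:
\begin{align}
	\dot{V}_i = \int_0^1 \tilde{w}_i \left( \check{\varepsilon}_i \partial_x^2 \tilde{w}_i - \check{c}_i \tilde{w}_i \right) dx = \bigl[ \check{\varepsilon}_i \tilde{w}_i \partial_x \tilde{w}_i \bigr]_0^1 - \check{\varepsilon}_i \norm{\partial_x \tilde{w}_i(\cdot,t)}_{L^2}^2 - \check{c}_i \norm{\tilde{w}_i(\cdot,t)}_{L^2}^2
\end{align}
The boundary contribution vanishes identically because of the homogeneous Dirichlet conditions \eqref{eq:obstarg3},\eqref{eq:obstarg4}, leaving $\dot{V}_i = -\check{\varepsilon}_i \norm{\partial_x \tilde{w}_i}_{L^2}^2 - \check{c}_i \norm{\tilde{w}_i}_{L^2}^2$, in which both terms are manifestly nonpositive.

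From here the estimate follows immediately. Discarding the (favorable) diffusion term already yields $\dot{V}_i \leq -2\check{c}_i V_i$, so a comparison (Gr\"onwall) argument gives $V_i(t) \leq V_i(t_0)\exp(-2\check{c}_i(t-t_0))$, which upon taking square roots is exactly \eqref{eq:obstarg_err_bound} with $\check{\Pi}_i = 1$ and $\check{\gamma}_i = \check{c}_i > 0$. If a sharper rate is desired, I would instead retain the diffusion term and apply the Poincar\'e (Wirtinger) inequality $\norm{\partial_x \tilde{w}_i}_{L^2}^2 \geq \pi^2 \norm{\tilde{w}_i}_{L^2}^2$, valid for functions vanishing at both endpoints, to obtain $\dot{V}_i \leq -2(\check{c}_i + \check{\varepsilon}_i \pi^2) V_i$ and hence $\check{\gamma}_i = \check{c}_i + \check{\varepsilon}_i \pi^2$.

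Honestly there is no genuinely hard step here: the computation is textbook, and the only point deserving care is justifying differentiation under the integral sign and the single integration by parts, both of which are legitimate for the classical solutions guaranteed by well-posedness of the (homogeneous, constant-coefficient) target system. I would also remark that the displayed conclusion concerns only $\tilde{w}_i$ in the $L^2$ norm; the auxiliary component and the $H^1$ space mentioned in the statement play no role in the stated bound and, if needed, would be handled by the identical energy computation applied termwise, so no additional machinery is required.
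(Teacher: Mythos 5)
Your proof is correct, and it is essentially the argument the paper relies on: the paper omits its own proof and instead cites \cite{SMYSHLYAEV2005613}, where the stability of this damped heat equation with homogeneous Dirichlet conditions is established by exactly this Lyapunov/energy computation (integration by parts, vanishing boundary terms, Gr\"onwall, optionally sharpened by Poincar\'e). Your closing remark is also apt: the $\tilde{v}_i$ and the $H^1$ component in the lemma statement are vestigial and play no role in the displayed $L^2$ bound, so nothing beyond the termwise energy estimate is needed.
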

The proof of Lemma \ref{lem:obstarg_stab} can be found in \cite{SMYSHLYAEV2005613}.

The companion gain kernel PDEs for $\phi_i$ can be found from imposing conditions arising from \eqref{eq:model_foldobs_1}-\eqref{eq:model_foldobs_3}, \eqref{eq:obstarg1}-\eqref{eq:obstarg4}, and the transformation \eqref{eq:obstfm}.
\begin{align}
	\partial_x^2 \Phi_i(x,y) - \partial_y^2 \Phi_i(x,y) &= -\frac{\lambda_i(x) + c_i}{\varepsilon_i} \Phi_i(x,y) \label{eq:obs_kernel_1}\\
  \Phi_i(1,y) &= 0\label{eq:obs_kernel_2}\\
	\Phi_i(x,x) &= \int_x^1 \frac{\lambda_i(y) + c_i}{2\varepsilon_i} dy \label{eq:obs_kernel_3}
\end{align}
In addition, one additional condition is imposed, which defines the observation gain $\phi_i(x)$ in terms of the transformation kernel $\Phi_i(x,y)$.
\begin{align}
	\phi_i(x) &= -\varepsilon_i \Phi_i(x,0)
\end{align}

\begin{lemma}
	\label{lem:obs_kernel_wellposedness}
	The Klein-Gordon PDEs defined by \eqref{eq:obs_kernel_1}-\eqref{eq:obs_kernel_3} admit unique $C^2(\mathcal{T})$ solutions. As a direct result, the gain kernels $\phi_i$ are bounded in the domain $\mathcal{T}$, that is,
	\begin{align}
		\norm{\Phi_i}_{L^\infty} := \max_{(x,y) \in \mathcal{T}} |\Phi_i(x,y)| \leq \bar{\Phi}_i < \infty
	\end{align}
\end{lemma}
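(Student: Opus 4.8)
The plan is to treat each kernel PDE \eqref{eq:obs_kernel_1}-\eqref{eq:obs_kernel_3} as a second-order hyperbolic (Klein--Gordon) boundary-value problem on the triangle $\mathcal{T}$, establish a unique continuous solution via successive approximations, and then bootstrap regularity up to $C^2$. First I would abbreviate the coefficient $G_i(x) := (\lambda_i(x) + c_i)/\varepsilon_i$, which is $C^1$ because $\lambda_i$ is the composition of $\lambda \in C^1$ with an affine folding map. Passing to the characteristic coordinates $\xi = x + y$, $\eta = x - y$ reduces the operator $\partial_x^2 - \partial_y^2$ to $4\partial_\xi\partial_\eta$, so with $\tilde\Phi_i(\xi,\eta) := \Phi_i(x,y)$ the PDE becomes $4\partial_\xi\partial_\eta \tilde\Phi_i = -G_i(\tfrac{\xi+\eta}{2})\tilde\Phi_i$. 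The triangle $\mathcal{T}$ maps to the triangle with vertices $(0,0)$, $(1,1)$, $(2,0)$; the diagonal condition \eqref{eq:obs_kernel_3} prescribes $\tilde\Phi_i$ on the characteristic $\eta = 0$, namely $\tilde\Phi_i(\xi,0) = \psi_i(\xi) := \int_{\xi/2}^1 (\lambda_i(s)+c_i)/(2\varepsilon_i)\,ds$, while \eqref{eq:obs_kernel_2} forces $\tilde\Phi_i \equiv 0$ on the non-characteristic line $\xi + \eta = 2$.

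Next I would convert the problem into an equivalent integral equation. Integrating $\partial_\eta(\partial_\xi\tilde\Phi_i)$ in $\eta$ from $0$ and using $\partial_\xi\tilde\Phi_i(\xi,0) = \psi_i'(\xi)$ yields an expression for $\partial_\xi\tilde\Phi_i$; integrating that in $\xi$ up to the boundary $\xi = 2-\eta$ and imposing $\tilde\Phi_i(2-\eta,\eta)=0$ produces
\begin{align}
\tilde\Phi_i(\xi,\eta) &= \psi_i(\xi) - \psi_i(2-\eta) \nonumber\\ &\quad + \frac{1}{4}\int_\xi^{2-\eta}\!\!\int_0^\eta G_i\Big(\tfrac{\xi'+\eta'}{2}\Big)\tilde\Phi_i(\xi',\eta')\,d\eta'\,d\xi'. \nonumber
\end{align}
A direct check recovers both boundary conditions: at $\eta = 0$ the double integral vanishes and $\psi_i(2)=0$, while on $\xi+\eta=2$ both the forcing and the integral vanish. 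The rectangular integration region $[\xi,2-\eta]\times[0,\eta]$ lies inside $\mathcal{T}$, so the operator is well defined. The delicate point here, which I expect to be the main obstacle, is that the two data are posed on a \emph{characteristic} and on a \emph{non-characteristic} line respectively, so the two integrations must be aligned with exactly the right limits for the fixed-point map to simultaneously encode both conditions.

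I would then solve by Picard iteration: set $\tilde\Phi_i^{(0)} = \psi_i(\xi) - \psi_i(2-\eta)$ and $\tilde\Phi_i^{(n+1)} = \tilde\Phi_i^{(0)} + \mathcal{K}_i[\tilde\Phi_i^{(n)}]$, where $\mathcal{K}_i$ is the double-integral operator above. Writing $\bar G_i := \max_{[0,1]}|G_i|$ and estimating the increments $\Delta^{(n)} := \tilde\Phi_i^{(n+1)} - \tilde\Phi_i^{(n)} = \mathcal{K}_i[\Delta^{(n-1)}]$ by induction, the nested Volterra structure in the variables $(2-\xi-\eta,\,\eta)$ yields a majorizing series of modified-Bessel type, so that $\sum_n \norm{\Delta^{(n)}}_{L^\infty} < \infty$ and the telescoping sum converges uniformly on the compact triangle to a continuous limit $\tilde\Phi_i$. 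Uniqueness follows from the same estimates: the difference $\delta$ of two solutions satisfies $\delta = \mathcal{K}_i^n[\delta]$ for every $n$, whose right-hand side tends to zero in $L^\infty$.

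Finally I would bootstrap regularity. Since $\lambda_i \in C^1$, the forcing $\psi_i(\xi)-\psi_i(2-\eta)$ is $C^2$ and $G_i \in C^1$; differentiating the integral equation in $\xi$ and in $\eta$ gives continuous expressions for $\partial_\xi\tilde\Phi_i$ and $\partial_\eta\tilde\Phi_i$ (the variable integration limits contribute continuous boundary terms), and a second differentiation, using $G_i \in C^1$ together with the first-order continuity just obtained, yields continuity of all second partials; in particular $\partial_\xi\partial_\eta\tilde\Phi_i = -\tfrac14 G_i\tilde\Phi_i$ recovers the PDE. Transforming back, $\Phi_i \in C^2(\mathcal{T})$. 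Boundedness is then immediate: a continuous function on the compact set $\mathcal{T}$ attains a finite maximum, giving $\norm{\Phi_i}_{L^\infty} \leq \bar\Phi_i < \infty$, and consequently $\phi_i(x) = -\varepsilon_i\Phi_i(x,0)$ is bounded on $[0,1]$.
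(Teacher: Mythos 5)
Your proposal is correct, and it is essentially the argument the paper relies on: the paper does not prove this lemma itself but delegates it to the cited reference \cite{smyshlyaev2005spacetime}, whose proof proceeds exactly as you do --- characteristic coordinates $\xi = x+y$, $\eta = x-y$, conversion of the Goursat-type data (characteristic line $\eta=0$ plus the non-characteristic line $\xi+\eta=2$) into a double-integral Volterra equation, successive approximations with a factorially convergent majorant, and a regularity bootstrap using $\lambda_i \in C^1$. In effect you have filled in the standard proof that the paper outsources, so there is nothing to flag beyond noting that your construction, its boundary-condition checks, and the inclusion of the integration rectangle $[\xi,2-\eta]\times[0,\eta]$ in the transformed triangle are all sound.
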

The proof of Lemma \eqref{lem:obs_kernel_wellposedness} is given in \cite{smyshlyaev2005spacetime}.
\begin{remark}
	For the special case $\lambda_i(x) = \lambda_i$ is a constant, an explicit solution to \eqref{eq:obs_kernel_1}-\eqref{eq:obs_kernel_3} can be found:
	\begin{align}
		\Phi_i(x,y) &= -\frac{\lambda_i + c_i}{\varepsilon_i} (1-x) \frac{I_1(z)}{z} \\
		z &= \sqrt{\frac{\lambda_i + c_i}{\varepsilon_i}(2 - x - y)}
	\end{align}
	where $I_1(z)$ is the modified Bessel function of the first kind.
\end{remark}

\begin{theorem}
	Consider the original system \eqref{eq:model_org_1}-\eqref{eq:model_org_3} and the auxiliary observer system defined in \eqref{eq:obs1}-\eqref{eq:obs4} with measurements $u(0,t),\partial_y u(0,t)$. Define the state estimate
	\begin{align}
		\hat{u}(y,t) := \begin{cases} \hat{u}_1\left(\frac{\hat{y}_0 - y}{1 + \hat{y}_0}\right) & y \leq \hat{y}_0 \\ \hat{u}_2\left(\frac{y-\hat{y}_0}{1 - \hat{y}_0}\right) & y > \hat{y}_0 \end{cases} \label{eq:obs_estorig}
	\end{align}
	and
	\begin{align}
		\hat{\bar{u}}(y,t) := \exp \left( -\int_{-1}^y \frac{\nu(z)}{2 \varepsilon} dz \right) \hat{u}(y,t)
	\end{align}
	Then $\hat{\bar{u}}(y,t) \rightarrow \bar{u}(y,t)$ exponentially fast in the sense of the $L^2$ norm, i.e. there exist coefficients $\check{\bar{\Pi}},\check{\gamma}_\text{min} > 0$ such that
	\begin{align}
		&\norm{\bar{u}(\cdot,t) - \hat{\bar{u}}(\cdot,t)}_{L^2} \nonumber\\ &\qquad\leq \check{\bar{\Pi}} \exp\left(-\check{\gamma}_\text{min}(t-t_0)\right) \norm{\bar{u}(\cdot,t_0) - \hat{\bar{u}}(\cdot,t_0)}_{L^2} \label{eq:obs_err_bound}
	\end{align}
	\label{thm:observer}
\end{theorem}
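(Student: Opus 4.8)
The plan is to mirror the structure of the state-feedback proof (Theorem \ref{thm:statefdback}): transfer the exponential decay of the observer-error target systems $\tilde{w}_i$ established in Lemma \ref{lem:obstarg_stab} back through the backstepping transformation to the folded error states $\tilde{u}_i$, reassemble the unfolded error $\tilde{u} := u - \hat{u}$ by inverting the folding change of variables, and finally undo the advection weighting to arrive at the bound \eqref{eq:obs_err_bound} on $\bar{u} - \hat{\bar{u}}$. A pleasant feature here, absent in the feedback design, is that the estimator structure \eqref{eq:obs1}--\eqref{eq:obs4} decouples the two half-domain subproblems, so the indices $i \in \{1,2\}$ can be handled entirely independently until the reassembly step.

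First I would establish, for each $i \in \{1,2\}$, an $L^2$ norm equivalence between $\tilde{u}_i$ and $\tilde{w}_i$. The transformation \eqref{eq:obstfm} and its inverse \eqref{eq:obstfm_inv} are Volterra operators with kernels $\Phi_i, \bar{\Phi}_i$; by Lemma \ref{lem:obs_kernel_wellposedness} these are continuous on the compact set $\mathcal{T}$, hence bounded, so both operators are bounded on $L^2(0,1)$. A Cauchy--Schwarz estimate applied to \eqref{eq:obstfm},\eqref{eq:obstfm_inv} yields constants $\check{m}_i, \check{M}_i > 0$ with
\begin{align*}
\check{M}_i^{-1}\norm{\tilde{u}_i(\cdot,t)}_{L^2} \leq \norm{\tilde{w}_i(\cdot,t)}_{L^2} \leq \check{m}_i \norm{\tilde{u}_i(\cdot,t)}_{L^2}.
\end{align*}
Chaining these with the target bound \eqref{eq:obstarg_err_bound} transfers the decay to the error state, giving $\norm{\tilde{u}_i(\cdot,t)}_{L^2} \leq \check{m}_i \check{M}_i \check{\Pi}_i \exp\left(-\check{\gamma}_i(t-t_0)\right)\norm{\tilde{u}_i(\cdot,t_0)}_{L^2}$.

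Next I would reassemble the unfolded error. With $\hat{u}$ defined by \eqref{eq:obs_estorig}, the folding change of variables relates the $L^2(-1,1)$ norm of $\tilde{u}$ to the folded component norms through the Jacobian factors $(1+\hat{y}_0)$ and $(1-\hat{y}_0)$:
\begin{align*}
\norm{\tilde{u}(\cdot,t)}_{L^2(-1,1)}^2 = (1+\hat{y}_0)\norm{\tilde{u}_1(\cdot,t)}_{L^2(0,1)}^2 + (1-\hat{y}_0)\norm{\tilde{u}_2(\cdot,t)}_{L^2(0,1)}^2.
\end{align*}
Bounding each summand by the per-component estimate and setting $\check{\gamma}_\text{min} := \min\{\check{\gamma}_1,\check{\gamma}_2\}$ collapses the two exponentials into a single decaying exponential, while the worst-case product of the equivalence constants and Jacobian factors is absorbed into $\check{\bar{\Pi}}$. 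Finally, because this paper assumes $\nu = 0$, the advection weighting \eqref{eq:model_adv_transform} is the identity and $\bar{u} - \hat{\bar{u}} = \tilde{u}$ exactly, so the estimate carries over verbatim; for general $\nu$ the weighting contributes only the bounded multiplicative factor $\exp\left(\pm\int_{-1}^y \nu(z)/(2\varepsilon)\,dz\right)$ on each side, again folded into $\check{\bar{\Pi}}$.

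The main obstacle is the bookkeeping in the reassembly: the norm equivalences are component-wise with distinct constants, so collapsing them into a single bound of the form \eqref{eq:obs_err_bound} requires passing to the slowest decay rate $\check{\gamma}_\text{min}$ and the worst-case constant, and the Jacobian factors $(1\pm\hat{y}_0)$ must be tracked consistently through both the forward reassembly and the corresponding lower bound on $\norm{\tilde{u}(\cdot,t_0)}_{L^2(-1,1)}$. This bookkeeping hinges on the strict control of $\norm{\Phi_i}_{L^\infty}$ and $\norm{\bar{\Phi}_i}_{L^\infty}$, which is precisely what Lemma \ref{lem:obs_kernel_wellposedness} supplies.
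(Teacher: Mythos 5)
Your proposal is correct and follows essentially the same route as the paper, whose proof is omitted for space but explicitly sketched as applying successive inverse transformations together with Lemmas \ref{lem:obstarg_stab} and \ref{lem:obs_kernel_wellposedness} --- precisely the chain of backstepping-inverse, folding-inverse, and advection-weighting steps you carry out. The component-wise norm equivalences, the Jacobian bookkeeping, and the passage to the slowest rate $\check{\gamma}_\text{min}$ are exactly the details the paper leaves implicit, and you have filled them in correctly.
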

We omit the proof for space, but note that it follows directly from Lemmas \ref{lem:obstarg_stab}, \ref{lem:obs_kernel_wellposedness}. By applying successive inverse transformations, the bound \eqref{eq:obs_err_bound} can be recovered.

\subsection{Output-feedback controller}
The output feedback controller proposed is the composition of the state observer with the state feedback. We state the main result below:
\begin{theorem}[Separation principle]
	Consider the original system \eqref{eq:model_org_1}-\eqref{eq:model_org_3} and the auxiliary observer system defined in \eqref{eq:obs1}-\eqref{eq:obs4} with measurements $u(0,t),\partial_y u(0,t)$. With the state estimate \eqref{eq:obs_estorig}, the feedback controller pair $\bar{\mathcal{U}}_{1}(t),\bar{\mathcal{U}}_2(t)$:
	\begin{align}
		\begin{pmatrix} \bar{\mathcal{U}}_1(t) \\ \bar{\mathcal{U}}_2(t) \end{pmatrix} &= \int_{-1}^1 \begin{pmatrix} F_1(y) \\ F_2(y) \end{pmatrix} \hat{\bar{u}}(y,t) dy \label{eq:outputfdback_law}
	\end{align}
	with the gains $F_1,F_2$ defined in \eqref{eq:gain_f1},\eqref{eq:gain_f2} will stabilize $(\bar{u},\hat{\bar{u}}) \equiv 0$ exponentially in the $L^2$ sense -- that is, there exist constants $\bar{\bar{\Pi}},\bar{\bar{\gamma}} > 0$ such that
	\begin{align}
		\norm{(\bar{u},\hat{\bar{u}})(\cdot,t)}_{L^2} \leq \bar{\bar{\Pi}} \exp(\bar\bar{\gamma}(t-t_0)) \norm{(\bar{u},\hat{\bar{u}})(\cdot,t_0)}_{L^2} \label{eq:outputfdback_bound}
	\end{align}
	\label{thm:outputfdback}
\end{theorem}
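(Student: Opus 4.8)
The plan is to prove the separation principle by exhibiting the interconnected dynamics as a cascade: an autonomous, exponentially stable estimation-error subsystem driving an exponentially stable state-feedback loop. First I would change composite coordinates from $(\bar{u},\hat{\bar{u}})$ to $(\bar{u},\tilde{\bar{u}})$, where $\tilde{\bar{u}} := \bar{u} - \hat{\bar{u}}$ is the estimation error. Since this map and its inverse are bounded, an exponential $L^2$ bound on $(\bar{u},\tilde{\bar{u}})$ transfers directly to \eqref{eq:outputfdback_bound} for $(\bar{u},\hat{\bar{u}})$ after adjusting $\bar{\bar{\Pi}}$. I would simultaneously pass to the advection-free variables $u,\hat{u},\tilde{u}$ through the bounded invertible weight \eqref{eq:model_adv_transform}, so that it suffices to bound $(u,\tilde{u})$.

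The key observation is that the error subsystem is autonomous. Folding $\tilde{u}$ about $\hat{y}_0$ yields $\tilde{u}_1,\tilde{u}_2$ obeying \eqref{eq:obserr1}-\eqref{eq:obserr4}, which contain neither $\hat{u}$ nor the control; hence Theorem \ref{thm:observer} (equivalently Lemma \ref{lem:obstarg_stab} with Lemma \ref{lem:obs_kernel_wellposedness} and invertibility of \eqref{eq:obstfm}) already gives $\norm{\tilde{u}(\cdot,t)}_{L^2} \le \check{\bar{\Pi}}\exp(-\check{\gamma}_{\min}(t-t_0))\norm{\tilde{u}(\cdot,t_0)}_{L^2}$. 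The remaining subsystem is the true plant $u$ driven by \eqref{eq:outputfdback_law}. By linearity $\int_{-1}^1 F\hat{\bar{u}} = \int_{-1}^1 F\bar{u} - \int_{-1}^1 F\tilde{\bar{u}}$, so the output-feedback controller equals the state-feedback controller of Theorem \ref{thm:statefdback} plus a perturbation $-\int_{-1}^1 F\tilde{\bar{u}}$. Crucially the gains $F_1,F_2$ are built from the $C^2$ kernels $k_{ij},p,q$ and are therefore bounded, making this perturbation a bounded $L^2$-functional of the error in which no derivative traces of $\tilde{u}$ appear.

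I would then push the plant into target coordinates by folding $u$ about $y_0$ and applying the two backstepping maps \eqref{eq:tfm1},\eqref{eq:tfm2}, obtaining $\Omega$. Because the transformations are spatial and the control enters only at $x=1$, under output feedback $\Omega$ solves exactly the stable target system \eqref{eq:targ2_int_1}-\eqref{eq:targ2_int_2}, but with the homogeneous condition \eqref{eq:targ2_int_3} replaced by $\Omega(1,t)=d(t)$, where $d(t)$ is the bounded linear transform of $-\int_{-1}^1 F\tilde{\bar{u}}$; hence $|d(t)|_2 \le c_d\norm{\tilde{u}(\cdot,t)}_{L^2} \le C\exp(-\check{\gamma}_{\min}(t-t_0))$. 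It then remains to show that the exponentially stable target operator, driven by an exponentially decaying boundary input, produces an exponentially decaying state. I would do this with the Lyapunov functional $V$ of Lemma \ref{lem:stability_targtfm} applied to the lifted variable $\Psi := \Omega - \ell(x)d(t)$, where $\ell$ is a fixed smooth profile with $\ell(1)=I$ chosen to respect the folding condition \eqref{eq:targ2_int_2} homogeneously; then $\Psi$ satisfies the target dynamics with homogeneous boundary data and an in-domain source proportional to $d(t)$ and $\dot{d}(t)$, both of which decay (the latter because $\frac{d}{dt}\tilde{u}$ inherits exponential decay from the stable autonomous error dynamics through analytic-semigroup smoothing). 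Computing $\dot{V}$ and absorbing the source with Young's inequality yields $\norm{\Omega(\cdot,t)}_{L^2} \le C'\exp(-\bar{\bar{\gamma}}(t-t_0))$, and inverting the two backstepping maps and the folding through the $L^2$ equivalences \eqref{eq:tfm2equiv},\eqref{eq:tfm1equiv}, exactly as in the proof of Theorem \ref{thm:statefdback}, returns the bound on $u$ and hence on $\bar{u}$. Equivalently, the conclusion follows from a cascade/variation-of-constants estimate that sidesteps $\dot{d}$: write the $\Omega$ response as the stable semigroup acting on the initial data plus a boundary-input convolution, then bound the convolution of $\exp(-\gamma\cdot)$ and $\exp(-\check{\gamma}_{\min}\cdot)$ to get decay at rate $\bar{\bar{\gamma}}=\min\{\gamma,\check{\gamma}_{\min}\}$, up to an arbitrarily small loss when the two rates coincide.

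I expect the main obstacle to be the rigorous handling of the coupling between the two subsystems. The cleanest-looking route, the true-plant picture above, pays for avoiding error-derivative traces with an inhomogeneous boundary term, so the real work lies in the boundary lifting and the control of $\dot{d}(t)$; the alternative picture that takes $\hat{u}$ itself as the driven state produces an in-domain source but forces through the flux trace $\partial_x\tilde{u}_i(0,t)$, which requires the stronger $H^1$-type error decay hinted at in Lemma \ref{lem:obstarg_stab}. A secondary bookkeeping difficulty is that the controller is folded about $y_0$ while the observer is folded about a generally distinct $\hat{y}_0$, so the perturbation must be transported between the two folded geometries through the original coordinate $y$; carrying the kernel-boundedness constants explicitly through this change of variables is what ultimately fixes $\bar{\bar{\Pi}}$ and $\bar{\bar{\gamma}}$.
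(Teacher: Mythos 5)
Your proposal is correct and follows essentially the same route as the paper: the same passage to $(\bar{u},\tilde{\bar{u}})$ coordinates exploiting the autonomous error cascade, the same splitting of the output-feedback law into the state-feedback law plus an error-driven perturbation, the same perturbed target system with boundary data $\Omega(1,t)$ bounded via Cauchy--Schwarz and Theorem \ref{thm:observer}, and the same absorption of that exponentially decaying boundary input --- indeed the paper's constant $\hat{\bar{\Pi}} \propto 1/\sqrt{2|\bar{\gamma}-\check{\gamma}_{\textrm{max}}|}$ and rate $\min\{\bar{\gamma},\check{\gamma}\}$ are precisely the convolution-of-exponentials estimate you describe in your ``variation-of-constants'' alternative. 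Your boundary-lifting discussion simply fills in, with more care than the paper's terse ``following the proof of Lemma \ref{lem:stability_targtfm} and Theorem \ref{thm:statefdback},'' the one step the paper leaves implicit.
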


\begin{proof}
	The output feedback control law \eqref{eq:outputfdback_law} is rewritten in the $(\bar{u},\tilde{\bar{u}})$ coordinates (recalling that $\tilde{\bar{u}} := \bar{u} - \hat{\bar{u}}$):
	\begin{align}
		\begin{pmatrix} \bar{\mathcal{U}}_1(t) \\ \bar{\mathcal{U}}_2(t) \end{pmatrix} &= \int_{-1}^1 \begin{pmatrix} F_1(y) \\ F_2(y) \end{pmatrix} \bar{u}(y,t) dy + \int_{-1}^1 \begin{pmatrix} F_1(y) \\ F_2(y) \end{pmatrix} \tilde{\bar{u}}(y,t) dy  \label{eq:outputfdback_law_rw}
	\end{align}
	Applying the transformations \eqref{eq:tfm1},\eqref{eq:tfm2} will yield the same target system \eqref{eq:targ2_int_1},\eqref{eq:targ2_int_2}, with the modified boundary condition
	\begin{align}
		\Omega(1,t) &= \int_{-1}^1 \begin{pmatrix} F_1(y) \\ F_2(y) \end{pmatrix} \tilde{\bar{u}}(y,t) dy
	\end{align}
	Applying the Cauchy-Schwarz inequality and the bound from Theorem \ref{thm:observer}, we can bound this boundary condition in the following manner:
	\begin{align}
			\norm{\Omega(1,t)}_2 &\leq \check{\bar{\Pi}} \norm{\begin{pmatrix} F_1(y) \\ F_2(y) \end{pmatrix}}_{L^2} \norm{\tilde{\bar{u}}(\cdot,t_0)}_{L^2} \nonumber\\&\qquad \times \exp\left(-\check{\gamma}_\textrm{min}(t-t_0)\right)  \label{eq:obs_err_bound_input}
	\end{align}
	Following the proof of Lemma \ref{lem:stability_targtfm} and Theorem \ref{thm:statefdback}, one can arrive at the following inequality on the $L^2$ norm of the system state:
	\begin{align}
		\norm{\bar{u}(\cdot,t)}_{L^2} &\leq \bar{\Pi} \exp \left( -\bar{\gamma}(t-t_0) \right) \norm{\bar{u}(\cdot,t_0)}_{L^2} \nonumber\\ &\quad+ \hat{\bar{\Pi}} \exp \bigg(-\min\{\bar{\gamma},\check{\gamma}\}(t-t_0) \bigg)\norm{\tilde{\bar{u}}(\cdot,t_0)}_{L^2} \label{eq:outputfdback_statebound}
	\end{align}
	where
	\begin{align}
		\hat{\bar{\Pi}} &= (1+\norm{p}_{L^2}+\norm{q}_{L^2})(1+\norm{K}_{L^2}) \nonumber\\
		&\qquad \times \norm{\begin{pmatrix} F_1 \\ F_2 \end{pmatrix}}_{L^2} \frac{\check{\bar{\Pi}}}{\sqrt{2|\bar{\gamma}-\check{\gamma}_{\textrm{max}}|}}
	\end{align}
	Taking the root sum square of \eqref{eq:obs_err_bound} and \eqref{eq:outputfdback_statebound}, one can arrive at an exponential stability result for $(\bar{u},\tilde{\bar{u}})$:
	\begin{align}
		\norm{(\bar{u},\tilde{\bar{u}})(\cdot,t)}_{L^2} \leq \tilde{\tilde{\Pi}} \exp \bigg(-\bar{\bar{\gamma}}(t-t_0) \bigg) \norm{(\bar{u},\tilde{\bar{u}})(\cdot,t_0)}_{L^2}
	\end{align}
	where
	\begin{align}
		\tilde{\tilde{\Pi}} &= \max\{\bar{\Pi}, \hat{\bar{\Pi}} + \check{\bar{\Pi}}\} \\
		\bar{\bar{\gamma}} &= \min\{\bar{\gamma},\check{\gamma}\}
	\end{align}
	Finally, transforming back into the $(\bar{u},\hat{\bar{u}})$ coordinates, \eqref{eq:outputfdback_bound} can be recovered, with $\bar{\bar{\Pi}} = 4 \tilde{\tilde{\Pi}}$.
\end{proof}

\section{Gain kernel well-posedness studies}
\label{sec:wellposed}
A necessary and sufficient condition for the invertibility of \eqref{eq:tfm1}, \eqref{eq:tfm2} (and their respective inverse transforms) is the existence of bounded kernels $K,p,q$ on their respective domains. It is not trivially obvious that the kernel PDEs \eqref{eq:K_mat_PDE}-\eqref{eq:K_fold}, \eqref{eq:p_orig},\eqref{eq:q_orig} are well-posed. The goal of this section is to establish and characterize the existence and uniqueness (and regularity) properties of these kernel PDEs.

\subsection{Well-posedness of $K$}
For $K$, we note that the kernel PDE is very similar to that of \cite{vazquez2016coupledpara}, and thus apply an adjusted approach to \eqref{eq:K_mat_PDE}-\eqref{eq:K_xtra}, \eqref{eq:K_fold_compact}. We use the following definition:
\begin{align}
	\check{K}(x,y) &= \sqrt{E} \partial_x K(x,y) + \partial_y K(x,y) \sqrt{E}
\end{align}
which allows us to transform the $2 \times 2$ system of 2nd-order hyperbolic PDE $K$ into the following $2 \times 2 \times 2$ 1st-order hyperbolic PDE system $(K,\check{K})$. Due to $G$ possessing triangular structure \eqref{eq:G_op_def} (a result of Assumption \ref{assum:order}), we can separate the kernel PDEs into cascading sets of PDE systems.
\subsubsection{Well-posedness of first row $K,\check{K}$: ($k_{1i},\check{k}_{1i}$)}
The first set of kernel PDEs we study is $(k_{11},k_{12},\check{k}_{11},\check{k}_{12})$. These kernels comprise an autonomous system of first-order hyperbolic PDEs on a bounded triangular domain, and are \emph{linear and $x$-invariant} PDEs. Thus, our expectation is that the energy of a (potentially weak) solution can only grow (in $x$) at an exponential rate at best.

The component-wise kernels are
\begin{align}
	\sqrt{\varepsilon_1} \partial_x k_{11}(x,y) + \sqrt{\varepsilon_1} \partial_y k_{11}(x,y) &= \check{k}_{11}(x,y) \label{eq:k_11_kernel}\\
	\sqrt{\varepsilon_1} \partial_x k_{12}(x,y) + \sqrt{\varepsilon_2} \partial_y k_{12}(x,y) &= \check{k}_{12}(x,y) \\
	\sqrt{\varepsilon_1} \partial_x \check{k}_{11}(x,y) - \sqrt{\varepsilon_1} \partial_y \check{k}_{11}(x,y) &= (\lambda_1(y) + c_1) k_{11}(x,y) \\
	\sqrt{\varepsilon_1} \partial_x \check{k}_{12}(x,y) - \sqrt{\varepsilon_2} \partial_y \check{k}_{12}(x,y) &= (\lambda_2(y) + c_1) k_{12}(x,y) \label{eq:kc_12_kernel}
\end{align}
with boundary conditions
\begin{align}
	k_{11}(x,0) &= \frac{a\varepsilon_2}{\varepsilon_1(a \varepsilon_2 + \sqrt{\varepsilon_1 \varepsilon_2})} \nonumber\\&\quad\times \int_0^x \sqrt{\varepsilon_1} \check{k}_{11}(y,0) + \sqrt{\varepsilon_2} \check{k}_{12}(y,0) dy \\
	k_{12}(x,0) &= \frac{1}{a \varepsilon_2 + \sqrt{\varepsilon_1 \varepsilon_2}} \nonumber\\&\quad\times \int_0^x \sqrt{\varepsilon_1} \check{k}_{11}(y,0) + \sqrt{\varepsilon_2} \check{k}_{12}(y,0) dy \\
	k_{12}(x,x) &= 0 \\
	\check{k}_{11}(x,x) &= - \frac{\lambda_1(x) + c_1}{2 \sqrt{\varepsilon_1}} \\
	\check{k}_{12}(x,x) &= 0
\end{align}
The system of kernel equations $(k_{1i},\check{k}_{1i})$ is self contained. Due to Assumption \ref{assum:order}, the characteristics of the kernel equations $k_{12},\check{k}_{12}$ will have sub-unity slope, in turn neccessitating two boundary condtions on $k_{12}$ at the $y = 0$ and $y = x$ boundaries.

\begin{lemma}
	The system of first-order hyperbolic PDEs \eqref{eq:k_11_kernel}-\eqref{eq:kc_12_kernel} and associated boundary conditions admit a unique set of $k_{11},k_{12} \in C^2(\mathcal{T})$, $\check{k}_{11},\check{k}_{12} \in C^1(\mathcal{T})$ solutions.

	\label{lem:wellposedk1}
\end{lemma}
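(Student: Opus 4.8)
The plan is to follow the characteristic-based successive-approximation scheme used for the analogous kernel system in \cite{vazquez2016coupledpara}, adapted to the nonlocal boundary data appearing here. First I would integrate each of \eqref{eq:k_11_kernel}--\eqref{eq:kc_12_kernel} along its characteristics. Since $\varepsilon_1,\varepsilon_2$ are constant, the characteristics are straight lines: those of $k_{11},\check{k}_{11}$ have slopes $\pm1$, while those of $k_{12},\check{k}_{12}$ have the sub-unity slopes $\pm\sqrt{\varepsilon_2/\varepsilon_1}$ (recall $\varepsilon_1>\varepsilon_2$ by Assumption \ref{assum:order}). Tracing each characteristic backward from an arbitrary $(x,y)\in\mathcal{T}$ to the edge carrying its data and appending the Duhamel integral of the right-hand side recasts the system as a coupled set of integral equations in which each unknown equals prescribed boundary data plus integrals of the other three unknowns over a causal subdomain of $\mathcal{T}$.

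The geometry of the $(k_{12},\check{k}_{12})$ characteristics is the crux. Because their slope $m:=\sqrt{\varepsilon_2/\varepsilon_1}$ is smaller than the slope of the diagonal, the backward characteristic of $k_{12}$ from $(x,y)$ reaches the diagonal $y=x$ when $y>mx$ but instead reaches the bottom edge $y=0$ when $y<mx$. The triangle $\mathcal{T}$ therefore splits along the line $y=mx$ into two regions in which the data for $k_{12}$ is drawn from $k_{12}(x,x)=0$ and from the nonlocal condition $k_{12}(x,0)$, respectively; this bifurcation is exactly what forces two boundary conditions on $k_{12}$. I would carry out this case split explicitly and verify that the nonlocal conditions $k_{11}(x,0)$ and $k_{12}(x,0)$, being themselves Volterra integrals of $\check{k}_{11},\check{k}_{12}$ along $y=0$, enter the representation causally.

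With the integral representation in hand, existence and uniqueness follow by successive approximations on the Banach space $(C(\mathcal{T}))^4$ equipped with the sup norm, writing $\Sigma:=(k_{11},k_{12},\check{k}_{11},\check{k}_{12})$ and iterating $\Sigma^{n+1}=\Sigma^0+\mathcal{F}[\Sigma^n]$, where $\Sigma^0$ gathers the boundary contributions and $\mathcal{F}$ is the linear integral operator. Since $\lambda_1,\lambda_2\in C^1$ are bounded on the compact set, there is a constant $M$ for which the causal structure yields $\norm{\Sigma^{n+1}-\Sigma^n}_\infty\le \frac{M^n}{n!}\norm{\Sigma^1-\Sigma^0}_\infty$; equivalently, a Bielecki weight $e^{-\rho x}$ renders $\mathcal{F}$ a contraction for large $\rho$. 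Summability of the factorial bound gives a unique continuous fixed point, and uniqueness for the original system follows because the difference of any two solutions obeys the homogeneous equation and the same estimate forces it to vanish.

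Finally I would bootstrap regularity: differentiating the integral equations, using that the $\lambda_i$ are $C^1$ and that characteristic integrals of continuous data are $C^1$, upgrades the fixed point to $\check{k}_{11},\check{k}_{12}\in C^1(\mathcal{T})$ and then $k_{11},k_{12}\in C^2(\mathcal{T})$. The main obstacle I anticipate is not the (standard) Volterra convergence but the bookkeeping around the sub-unity characteristics and the nonlocal boundary data -- in particular, checking compatibility of the prescribed values at the corner $(0,0)$ and across the interface $y=mx$ so that the pieced-together solution is genuinely $C^2/C^1$ rather than merely piecewise smooth.
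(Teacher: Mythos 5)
Your proposal is correct and follows essentially the same route as the paper: method of characteristics yielding Volterra-type integral equations (with the same piecewise split of $\mathcal{T}$ along $y=\sqrt{\varepsilon_2/\varepsilon_1}\,x$ forced by the sub-unity characteristics of $k_{12},\check{k}_{12}$), then successive approximations with factorial-decay bounds on the residuals, and finally the regularity bootstrap from $\lambda_i\in C^1$ to $k_{1i}\in C^2(\mathcal{T})$, $\check{k}_{1i}\in C^1(\mathcal{T})$. The only cosmetic differences are that the paper iterates on $(k_{11},k_{12})$ alone via a composed operator $\Gamma_1\circ\Gamma_2$ (recovering $\check{k}_{11},\check{k}_{12}$ afterward) and invokes a fixed-point theorem where you use the equivalent contraction/Bielecki-weight argument.
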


\begin{proof}
With a direct application of the method of characteristcs to \eqref{eq:k_11_kernel}-\eqref{eq:kc_12_kernel}, Volterra-type integral equations can be recovered:
\begin{align}
	k_{11}(x,y) &= c_1 a^3 \int_0^{x-y} \sqrt{\varepsilon_1} \check{k}_{11}(z,0) + \sqrt{\varepsilon_2} \check{k}_{12}(z,0) dz \nonumber\\&\quad + \int_0^{\sqrt{\varepsilon_1^{-1}}y} \check{k}_{11}(\sqrt{\varepsilon_1}z + x - y, \sqrt{\varepsilon_1}z) dz \label{eq:k11_soln} \\
	k_{12}(x,y) &= \begin{cases} k_{12,l} & \sqrt{\varepsilon_1} y \leq \sqrt{\varepsilon_2} x \\ k_{12,u} & \sqrt{\varepsilon_1} y \geq \sqrt{\varepsilon_2} x \end{cases} \label{eq:k12_soln} \\
	\check{k}_{11}(x,y) &= - \frac{\lambda_1 \left( \frac{x+y}{2}\right) + c_1}{2 \sqrt{\varepsilon_1}} \nonumber\\ &\quad + \int_0^{\frac{x-y}{2\sqrt{\varepsilon_1}}} \left(\lambda_1\left(-\sqrt{\varepsilon_1}z + \frac{x+y}{2}\right) + c_1 \right) \nonumber\\&\qquad \times k_{11}\left(\sqrt{\varepsilon_1}z + \frac{x+y}{2}, -\sqrt{\varepsilon_1}z + \frac{x+y}{2} \right) dz \label{eq:kc11_soln} \\
	\check{k}_{12}(x,y) &= \int_0^{\frac{x-y}{\sqrt{\varepsilon_1}+\sqrt{\varepsilon_2}}} \lambda_2(-\sqrt{\varepsilon_2}z + \sigma_3(x,y) + c_1) \nonumber\\&\qquad \times k_{12}(\sqrt{\varepsilon_1}z + \sigma_3(x,y), -\sqrt{\varepsilon_2}z + \sigma_3(x,y)) dz \label{eq:kc12_soln}
\end{align}
where $k_{12,u},k_{12,l}$ is defined by
\begin{align}
	k_{12,l}(x,y) &= \int_0^{\sigma_1(x,y)} \sqrt{\varepsilon_1} \check{k}_{11}(z,0) + \sqrt{\varepsilon_2} \check{k}_{12}(z,0) dz \nonumber\\&\quad+ \int_0^{\sqrt{\varepsilon_2^{-1}}y} \check{k}_{12}(\sqrt{\varepsilon_1}z + \sigma_1(x,y),\sqrt{\varepsilon_2}z) dz \\
	k_{12,u}(x,y) &= \int_0^{\frac{x-y}{\sqrt{\varepsilon_1}-\sqrt{\varepsilon_2}}} \check{k}_{12}(\sqrt{\varepsilon_1}z + \sigma_2(x,y),\nonumber\\&\qquad\qquad\qquad\qquad \sqrt{\varepsilon_2}z + \sigma_2(x,y)) dz
\end{align}
and the functions $\sigma_i$ given by
\begin{align}
	\sigma_1(x,y) &= \sqrt{\varepsilon_2^{-1}} (\sqrt{\varepsilon_2} x - \sqrt{\varepsilon_1}y) \\
	\sigma_2(x,y) &= (\sqrt{\varepsilon_1}-\sqrt{\varepsilon_2})^{-1} (\sqrt{\varepsilon_1}y - \sqrt{\varepsilon_2}x) \\
	\sigma_3(x,y) &= (\sqrt{\varepsilon_1}+\sqrt{\varepsilon_2})^{-1} (\sqrt{\varepsilon_2}x + \sqrt{\varepsilon_1}y)
\end{align}
From substituting \eqref{eq:kc12_soln} into \eqref{eq:k12_soln} on the domain $\mathcal{T}_{u} := \{(x,y) \in \R^2 | 0 \leq \sqrt{\varepsilon_2/\varepsilon_1} x \leq y \leq x \leq 1 \}$, one can immediately notice $k_{12,u}(x,y) = \check{k}_{12}(x,y) \equiv 0, (x,y) \in \mathcal{T}_u$.

Using \eqref{eq:k11_soln}-\eqref{eq:kc12_soln}, the following integral equation relations can be established:
\begin{align}
	\begin{pmatrix} k_{11} \\ k_{12} \end{pmatrix} &= \Gamma_1\left[ \begin{pmatrix} \check{k}_{11} \\ \check{k}_{12} \end{pmatrix} \right] := I_1\left[ \begin{pmatrix} \check{k}_{11} \\ \check{k}_{12} \end{pmatrix} \right](x,y) + \Psi_1(x,y) \label{eq:k1_iter}\\
	\begin{pmatrix} \check{k}_{11} \\ \check{k}_{12} \end{pmatrix} &= \Gamma_2\left[ \begin{pmatrix} k_{11} \\ k_{12} \end{pmatrix} \right] := I_2\left[ \begin{pmatrix} k_{11} \\ k_{12} \end{pmatrix} \right](x,y) + \Psi_2(x,y) \label{eq:kc1_iter}
\end{align}
where the operators $\Gamma_1,\Gamma_2$ over $(x,y) \in \mathcal{T})$ encapsulate the affine integral equations \eqref{eq:k11_soln}-\eqref{eq:kc12_soln}, and $I_1,I_2$ represent the linear part in $k,\check{k}$, while $\Psi_1,\Psi_2$ represent the constant part. We establish the following iteration via the method of successive approximations to recover a solution:
\begin{align}
	\begin{pmatrix} k_{11,n+1} \\ k_{12,n+1} \end{pmatrix} &= (\Gamma_1 \circ \Gamma_2)\left[ \begin{pmatrix} k_{11,n} \\ k_{12,n} \end{pmatrix} \right] \label{eq:k1_iter_self}
\end{align}
The existence of a solution $(k_{11},k_{12})$ through the iteration \eqref{eq:k1_iter_self} will imply the existence of a solution $(\check{k}_{11},\check{k}_{12})$ via \eqref{eq:kc1_iter}. To show that this iteration converges, we first define
\begin{align}
	\Delta k_{1,n} := \begin{pmatrix} \Delta k_{11,n} \\ \Delta k_{12,n} \end{pmatrix} :=  \begin{pmatrix} k_{11,n+1} - k_{11,n}  \\ k_{12,n+1} - k_{12,n} \end{pmatrix}\label{eq:delta_k1_def}
\end{align}
Applying \eqref{eq:delta_k1_def} to \eqref{eq:k1_iter_self} and utilizing the properties of affine operators, one can recover the following iteration for $\Delta k_{1,n}$:
\begin{align}
	\Delta k_{1,n+1} = (I_1 \circ I_2)[\Delta k_{1,n}](x,y) \label{eq:delta_k1_iter}
\end{align}
As $\Gamma_1 \circ \Gamma_2$ is a continuous mapping over the complete convex space of bounded continuous functions, then the following statement holds via the Schauder fixed point theorem.
\begin{align}
	\lim_{n \rightarrow \infty} \begin{pmatrix} k_{11,n} \\ k_{12,n} \end{pmatrix} &= \begin{pmatrix} k_{11,0} \\ k_{12,0} \end{pmatrix} + \sum_{n=0}^{\infty} \Delta k_{1,n} = \begin{pmatrix} k_{11} \\ k_{12} \end{pmatrix} \label{eq:k1_limit}
\end{align}
Choosing $k_{11,0} = k_{12,0} = 0$, one can compute the following bound on $\Delta k_{1,0}$ directly:
\begin{align}
	\norm{\Delta k_{1,0}}_1 \leq (\bar{\lambda} + c_1) \left(c_1 a^3 + \varepsilon_1^{-1} + 1\right)x \label{eq:k1_init}
\end{align}
where we have taken the liberty of defining
\begin{align}
	\bar{\lambda} := \max \{ \norm{\lambda_1}_{L^\infty}, \norm{\lambda_2}_{L^\infty} \} = \norm{\lambda}_{L^\infty}
\end{align}
It is important to note that the norm $\norm{\Delta k_{1,0}}_1$ is the \emph{vector} 1-norm and not the $L^1$ function norm. That is,
\begin{align}
	\norm{\Delta k_{1,n}}_1 := |\Delta k_{11,n}(x,y)| + |\Delta k_{12,n}(x,y)|
\end{align}
By using \eqref{eq:k1_init} in \eqref{eq:delta_k1_iter}, one can find the following bound on $\norm{\Delta k_{1,n}}_1$ indexed by iteration $n$:
\begin{align}
	\norm{\Delta k_{1,n}}_1 \leq \frac{2^n ((\bar{\lambda} + c_1)(c_1 a^3 + \varepsilon_1^{-1} + 1))^{n+1}}{(2n+1)!} x^{2n+1}
\end{align}
Due to the bounded domain $\mathcal{T}$, one can find \emph{uniform} convergence properties (where the uniform bound is simply evaluated for $x^{2n+1} \leq 1, \forall x \in [0,1]$).
From \eqref{eq:k1_limit},
\begin{align}
	\norm{\begin{pmatrix} k_{11} \\ k_{12} \end{pmatrix}}_{1} \leq \sum_{n=0}^{\infty} \frac{2^n ((\bar{\lambda} + c_1)(c_1 a^3 + \varepsilon_1^{-1} + 1))^{n+1}}{(2n+1)!} x^{2n+1} \label{eq:k1_gain_bound}
\end{align}
To recover the $C^2(\mathcal{T})$ regularity, one can directly reference \eqref{eq:k1_limit}. Noting that the set $C^n(\mathcal{T})$ is closed under addition for $n \in \N$ along with the \emph{integral} iteration \eqref{eq:delta_k1_iter}, it is easy to see that $\lambda_{1},\lambda_{2} \in C^1([0,1])$ generates the regularity $k_{11}, k_{12} \in C^2(\mathcal{T})$.
\end{proof}

\subsubsection{Well-posedness of second row $K,\check{K}$: ($k_{2i},\check{k}_{2i}$)}
The second set of kernels is $(k_{21},k_{22},\check{k}_{21},\check{k}_{22})$. These feature the kernels $k_{11}, k_{12}$ acting as source terms, however, by employing estimates of $k_{11},k_{12}$ from Lemma \ref{lem:wellposedk1}, we can simplify the system significantly. However, the structure of the problem is different, most notably in how the characteristics evolve.

To account for the different nature of these characteristics, we perform one more transformation on the kernels for $k_{2i}$:
\begin{align}
	\hat{k}_{2i}(x,y) &= \sqrt{\varepsilon_2} \partial_x k_{2i}(x,y) - \sqrt{\varepsilon_i} \partial_y k_{2i}(x,y) \label{eq:kh2_tfm}
\end{align}
where $i \in \{1,2\}$. We then turn our attention to the gain kernel system $(\hat{k}_{21},\check{k}_{21},\hat{k}_{22},\check{k}_{22})$.

The component system of kernel PDEs for $(\hat{k}_{21},\check{k}_{21},\hat{k}_{22},\check{k}_{22})$ is
\begin{align}
	\sqrt{\varepsilon_2} \partial_x \hat{k}_{21}(x,y) + \sqrt{\varepsilon_1} \partial_y \hat{k}_{21}(x,y) &= (\lambda_1(y) + c_2) k_{21}(x,y) \nonumber\\&\quad- g[k_{21}](x) k_{11}(x,y) \label{eq:k_21_kernel}\\
	\sqrt{\varepsilon_2} \partial_x \hat{k}_{22}(x,y) + \sqrt{\varepsilon_2} \partial_y \hat{k}_{22}(x,y) &= (\lambda_2(y) + c_2) k_{22}(x,y) \nonumber\\&\quad- g[k_{21}](x) k_{12}(x,y) \\
	\sqrt{\varepsilon_2} \partial_x \check{k}_{21}(x,y) - \sqrt{\varepsilon_1} \partial_y \check{k}_{21}(x,y) &= (\lambda_1(y) + c_2) k_{21}(x,y) \nonumber\\&\quad- g[k_{21}](x) k_{11}(x,y) \\
	\sqrt{\varepsilon_2} \partial_x \check{k}_{22}(x,y) - \sqrt{\varepsilon_2} \partial_y \check{k}_{22}(x,y) &= (\lambda_2(y) + c_2) k_{22}(x,y) \nonumber\\&\quad- g[k_{21}](x) k_{12}(x,y) \label{eq:kc_22_kernel}
\end{align}
subject to the following boundary conditions:
\begin{align}
	\hat{k}_{21}(x,0) &= -\frac{1-a^2}{1+a^2} \check{k}_{21}(x,0) +  \frac{2a^3}{1+a^2} \check{k}_{22}(x,0) \label{eq:k21_reflection1}\\
	\hat{k}_{22}(x,0) &= \frac{2}{a(1+a^2)} \check{k}_{21}(x,0) + \frac{1-a^2}{1+a^2} \check{k}_{22}(x,0) \\
	\check{k}_{21}(x,x) &= - \frac{\sqrt{\varepsilon_1} - \sqrt{\varepsilon_2}}{\sqrt{\varepsilon_1} + \sqrt{\varepsilon_2}} \hat{k}_{21}(x,x) \label{eq:k21_reflection2}\\
	\check{k}_{22}(x,x) &= - \frac{\lambda_2(x) + c_2}{2 \sqrt{\varepsilon_2}} \label{eq:k21_bc_end}
\end{align}
where the inverse transformations are given to be
\begin{align}
	k_{21}(x,y) &= \frac{1}{2\sqrt{\varepsilon_2}} \int_y^x \check{k}_{21}(z,y) + \hat{k}_{21}(z,y) dz \label{eq:k21_invtfm}\\
	k_{22}(x,y) &= - \int_0^y \frac{\lambda_2(z) + c_2}{2 \sqrt{\varepsilon_2}} dz \nonumber\\&\quad+ \frac{1}{2\sqrt{\varepsilon_2}} \int_y^x \check{k}_{22}(z,y) + \hat{k}_{22}(z,y) dz \label{eq:k22_invtfm}
\end{align}
and the function $g[k_{21}](x)$ can be expressed in terms of $\hat{k}_{21},\check{k}_{21}$:
\begin{align}
	g[k_{21}](x) = \frac{(\varepsilon_2 - \varepsilon_1)}{2 \sqrt{\varepsilon_1}} (\check{k}_{21}(x,x) - \hat{k}_{21}(x,x))
\end{align}
Without the estimates given by Lemma \ref{lem:wellposedk1}, the system of gain kernels would in fact be \emph{nonlinear}, a significantly harder problem.

\begin{lemma}
	The system of first-order hyperbolic PDE \eqref{eq:k_21_kernel}-\eqref{eq:kc_22_kernel} and associated boundary conditions admit a unique set of $\hat{k}_{21},\check{k}_{21},\hat{k}_{22},\check{k}_{22} \in C^1(\mathcal{T})$ solutions.

	\label{lem:wellposedk2}
\end{lemma}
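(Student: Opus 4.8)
The plan is to mirror the successive-approximation argument of Lemma \ref{lem:wellposedk1}, exploiting the fact that $k_{11},k_{12}$ and their $L^\infty$ bounds are already furnished by that lemma, so that the present system is \emph{linear} in the unknowns $(\hat{k}_{21},\check{k}_{21},\hat{k}_{22},\check{k}_{22})$. First I would integrate the four transport equations \eqref{eq:k_21_kernel}--\eqref{eq:kc_22_kernel} along their characteristics. By Assumption \ref{assum:order} the slopes are $\pm\sqrt{\varepsilon_1/\varepsilon_2}$ (exceeding one in magnitude) for the $k_{21}$-pair and $\pm 1$ for the $k_{22}$-pair, so each characteristic issued from an interior point strikes the boundaries $y=0$ and $y=x$; the reflection relations \eqref{eq:k21_reflection1}--\eqref{eq:k21_bc_end} then close the loop between the $\hat{k}$ and $\check{k}$ families. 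Substituting the inverse relations \eqref{eq:k21_invtfm},\eqref{eq:k22_invtfm} for $k_{21},k_{22}$ together with the expression $g[k_{21}](x)=\frac{\varepsilon_2-\varepsilon_1}{2\sqrt{\varepsilon_1}}(\check{k}_{21}(x,x)-\hat{k}_{21}(x,x))$ into the right-hand sides, the system reduces to a closed set of affine Volterra-type integral equations whose inhomogeneous terms are built from the known, bounded $k_{11},k_{12}$.

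Next, exactly as in \eqref{eq:k1_iter_self}, I would cast these integral equations as a fixed-point problem $\mathbf{k}_{2,n+1}=\Gamma[\mathbf{k}_{2,n}]$ on the Banach space of bounded continuous functions on the compact set $\mathcal{T}$, where $\Gamma$ is affine and its linear part $I$ is a composition of Volterra integral operators. Because $k_{11},k_{12}$ enter only as fixed coefficients, $I$ is genuinely linear, so the successive differences satisfy $\Delta\mathbf{k}_{2,n+1}=I[\Delta\mathbf{k}_{2,n}]$. I would then estimate $I$ and iterate: each application contributes one factor of the coefficient bound $(\bar{\lambda}+c_2+\text{const}\cdot\max\{\norm{k_{11}}_{L^\infty},\norm{k_{12}}_{L^\infty}\})$ together with one extra spatial integration, so that $\norm{\Delta\mathbf{k}_{2,n}}_1$ is dominated by a term of the form $C^{n+1}x^{pn}/(pn)!$ for a fixed $p$. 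The factorial denominator dominates on $\mathcal{T}$, giving uniform convergence of $\sum_n\Delta\mathbf{k}_{2,n}$ and hence a unique bounded continuous fixed point; uniqueness follows because the difference of any two solutions is fixed by $I$, whose Neumann series converges to zero.

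The main obstacle I anticipate is the bookkeeping forced by the \emph{two} reflecting boundaries together with the self-referential diagonal coupling through $g[k_{21}]$, which is evaluated on $y=x$ and fed back into every equation. Unlike the single Volterra direction of the first row, a characteristic here may reflect several times between $y=0$ and $y=x$ before reaching the data, so one must verify that the resulting operator retains the strictly causal (triangular) Volterra structure needed for the factorial bound. The point to check is that each reflection strictly advances the characteristic toward the corner $x=0$ and that the bounded reflection maps \eqref{eq:k21_reflection1}--\eqref{eq:k21_bc_end} introduce no loss of causality; since $\mathcal{T}$ is compact and the slopes are fixed, the number of reflections along any characteristic is finite and $I$ remains Volterra in $x$, so the contraction estimate survives. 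The diagonal coupling $g[k_{21}]$ is handled by noting it depends only on boundary traces controlled by the same iteration, so it folds into $I$ without destroying linearity.

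Finally, for the $C^1(\mathcal{T})$ regularity I would argue as at the end of Lemma \ref{lem:wellposedk1}: the limit is represented by the uniformly convergent series expansion in the spirit of \eqref{eq:k1_limit}, each term of which is obtained by integrating $C^1$ data, and $C^1(\mathcal{T})$ is closed under the uniformly convergent sum. Since $\lambda_1,\lambda_2\in C^1([0,1])$ and $k_{11},k_{12}\in C^2(\mathcal{T})$ by Lemma \ref{lem:wellposedk1}, the coefficients of the integral equations are $C^1$, which upgrades the bounded continuous fixed point to $\hat{k}_{21},\check{k}_{21},\hat{k}_{22},\check{k}_{22}\in C^1(\mathcal{T})$, completing the argument.
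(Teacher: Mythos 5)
There is a genuine gap, and it sits exactly where you flagged your ``main obstacle'': your claim that ``the number of reflections along any characteristic is finite'' is false, and the rest of your argument leans on it. Trace the $(\hat{k}_{21},\check{k}_{21})$ loop backward from a point on $y=0$ at abscissa $x'$: the $\check{k}_{21}$-characteristic reaches the diagonal at abscissa $\delta_3 x'$ with $\delta_3 = \sqrt{\varepsilon_1}/(\sqrt{\varepsilon_1}+\sqrt{\varepsilon_2})$, where \eqref{eq:k21_reflection2} reflects it into $\hat{k}_{21}$; the $\hat{k}_{21}$-characteristic then returns to $y=0$ at abscissa $\delta_1 x'$ with $\delta_1 = (\sqrt{\varepsilon_1}-\sqrt{\varepsilon_2})/(\sqrt{\varepsilon_1}+\sqrt{\varepsilon_2}) \in (0,1)$, where \eqref{eq:k21_reflection1} reflects it back into $\check{k}_{21}$ and $\check{k}_{22}$. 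Each cycle contracts the abscissa by the fixed factor $\delta_1$, so the reflection points $x', \delta_1 x', \delta_1^2 x',\dots$ accumulate at the corner $(0,0)$ but never terminate at known data in finitely many steps: the number of reflections is \emph{infinite}. Consequently the system does not ``reduce to a closed set of affine Volterra-type integral equations'' by finitely many substitutions, and your factorial estimate does not survive as stated: the reflection conditions \eqref{eq:k21_reflection1}--\eqref{eq:k21_bc_end} are \emph{algebraic} (pointwise traces, no integration), so plugging the previous iterate into them gains no factor of $x$ and no $1/n!$; at best it gains a geometric factor, and the off-diagonal reflection coefficient $2/(a(1+a^2))$ can exceed one, so not even a naive contraction argument is available. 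This is precisely the difficulty the paper singles out (``the trace term $g[k_{21}](x)$ presents issues with this approach'').

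The paper's resolution, which your proposal is missing, is to unroll the reflections \emph{completely before} iterating: enforcing \eqref{eq:k21_reflection1}--\eqref{eq:k21_bc_end} recursively on the characteristic equations \eqref{eq:kh_21_inteq}--\eqref{eq:kc_21_inteq} yields representations \eqref{eq:kh_21_eq}--\eqref{eq:kc_22_eq} in which the unknowns appear only under integral operators, at the price of infinite sums over reflections weighted by $\delta_1^n\delta_2^n$ (with $\delta_1,\delta_2,\delta_3 \in (0,1)$ by Assumption \ref{assum:order}) and residual limit terms $\Theta$. These sums converge geometrically (contributing bounded factors like $1/(1-\delta_1\delta_2)$ to $\norm{\Psi_3}_1$), the limit terms vanish on bounded continuous iterates, and only \emph{then} does the successive approximation produce the factorial decay $\norm{\Delta k_{2,n}}_1 \leq C^n \bar{\Psi}_{3,0}\, x^n / n!$. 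So your overall architecture (linearity via Lemma \ref{lem:wellposedk1}, fixed point in the space of bounded continuous functions, regularity bootstrap from $\lambda_i \in C^1$) matches the paper, but without the infinite-reflection summation and the verification that the accumulated reflection coefficients form a convergent geometric series, the central convergence estimate has no proof.
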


\begin{proof}
	The primary technical difficulty of this proof is incorporating the boundary conditions \eqref{eq:k21_reflection1},\eqref{eq:k21_reflection2}. While in standard integral equation solutions one can apply successive approximations to recover a convergent sum of monomial terms (in increasing powers), the trace term $g[k_{21}](x)$ presents issues with this approach. Thus, we utilize an approach inspired from \cite{auriol2018two},\cite{camacho2017coupled} involving a recursion relating to the finite volume of integration (of the domain $\mathcal{T}$).

	We apply the method of characteristics to \eqref{eq:k_21_kernel}-\eqref{eq:kc_22_kernel} to recover the following system of coupled integro-algebraic equations:

	\begin{align}
		\hat{k}_{21}(x,y) &= \hat{k}_{21}\left( \sigma_4(x,y), 0\right) + \hat{I}_{21}[\hat{k}_{21},\check{k}_{21}](x,y) \label{eq:kh_21_inteq} \\
		\hat{k}_{22}(x,y) &= \hat{k}_{22}\left( x - y, 0\right) + \hat{I}_{22}[\hat{k}_{22},\check{k}_{22},\hat{k}_{21}](x,y) \nonumber\\
		&\quad-\int_0^{\frac{y}{\sqrt{\varepsilon_2}}} \bigg[ \frac{\lambda_2(\sqrt{\varepsilon_2}z)+ c_2}{2\sqrt{\varepsilon_2}} \nonumber\\
		&\qquad\qquad\qquad\times\int_0^{\sqrt{\varepsilon_2} z} (\lambda_2(\xi) + c_2 )d\xi \bigg]dz  \\
		\check{k}_{21}(x,y) &= \check{k}_{21}\left( \sigma_5(x,y), \sigma_5(x,y)\right) + \check{I}_{21}[\hat{k}_{21},\check{k}_{21}](x,y) \\
		\check{k}_{22}(x,y) &= \check{k}_{22}\left( \frac{x+y}{2}, \frac{x+y}{2}\right) + \check{I}_{22}[\hat{k}_{22},\check{k}_{22},\hat{k}_{21}](x,y) \nonumber\\
		&\quad-\int_0^{\frac{x-y}{2\sqrt{\varepsilon_2}}} \bigg[ \frac{\lambda_2\left(-\sqrt{\varepsilon_2} z + \frac{x+y}{2}\right)+ c_2}{2\sqrt{\varepsilon_2}} \nonumber\\
		&\qquad\qquad\qquad\times\int_0^{-\sqrt{\varepsilon_2} z + \frac{x+y}{2}} (\lambda_2(\xi) + c_2 )d\xi \bigg]dz \label{eq:kc_21_inteq}
	\end{align}
	where
	\begin{align}
		\sigma_4(x,y) := x-\frac{\sqrt{\varepsilon_2}}{\sqrt{\varepsilon_1}}y \\
		\sigma_5(x,y) := \frac{\sqrt{\varepsilon_1}x + \sqrt{\varepsilon_2}y}{\sqrt{\varepsilon_1} + \sqrt{\varepsilon_2}}
	\end{align}
	and the integral operators $\hat{I}_{21},\hat{I}_{22},\check{I}_{21},\check{I}_{22}$ are defined
	\begin{align}
		&\hat{I}_{21}[\hat{k}_{21},\check{k}_{21}](x,y) \nonumber\\
		&\quad:= \int_0^{\frac{y}{\sqrt{\varepsilon_1}}} \bigg[ - \frac{\varepsilon_2 - \varepsilon_1}{\sqrt{\varepsilon_1} + \sqrt{\varepsilon_2}} k_{11}(\sqrt{\varepsilon_2} z + \sigma_4(x,y),\sqrt{\varepsilon_1}z) \nonumber\\
		&\qquad\qquad\qquad\qquad \times\hat{k}_{21}(\sqrt{\varepsilon_2} z + \sigma_4(x,y), \sqrt{\varepsilon_2} + \sigma_4(x,y)) \nonumber\\
		&\qquad\qquad+ \frac{\lambda_1(\sqrt{\varepsilon_1}z)+ c_2}{2\sqrt{\varepsilon_2}} \int_{\sqrt{\varepsilon_1}z}^{\sqrt{\varepsilon_2}z + \sigma_4(x,y)} \bigg(\check{k}_{21}(\xi,\sqrt{\varepsilon_1}z) \nonumber\\
		&\qquad\qquad\qquad\qquad\qquad\qquad\qquad+ \hat{k}_{21}(\xi,\sqrt{\varepsilon_1} z)\bigg) d\xi\bigg] dz \\
		&\hat{I}_{22}[\hat{k}_{22},\check{k}_{22},\hat{k}_{21}](x,y) \nonumber\\
		&\quad:= \int_0^{\frac{y}{\sqrt{\varepsilon_2}}} \bigg[ - \frac{\varepsilon_2 - \varepsilon_1}{\sqrt{\varepsilon_1} + \sqrt{\varepsilon_2}} k_{12}(\sqrt{\varepsilon_2} z + x-y,\sqrt{\varepsilon_2}z) \nonumber\\
		&\qquad\qquad\qquad\qquad \times\hat{k}_{21}(\sqrt{\varepsilon_2} z + x-y, \sqrt{\varepsilon_2} + x-y) \nonumber\\
		&\qquad\qquad+ \frac{\lambda_2(\sqrt{\varepsilon_2}z)+ c_2}{2\sqrt{\varepsilon_2}} \bigg( \int_{\sqrt{\varepsilon_2}z}^{\sqrt{\varepsilon_2}z + x-y} \bigg(\check{k}_{22}(\xi,\sqrt{\varepsilon_2}z) \nonumber\\
		&\qquad\qquad\qquad\qquad\qquad\qquad\qquad+ \hat{k}_{22}(\xi,\sqrt{\varepsilon_2} z)\bigg) d\xi \bigg)\bigg] dz \\
		&\check{I}_{21}[\hat{k}_{21},\check{k}_{21}](x,y) \nonumber\\
		&\quad:= \int_0^{\frac{x-y}{\sqrt{\varepsilon_1} + \sqrt{\varepsilon_2}}} \bigg[ - \frac{\varepsilon_2 - \varepsilon_1}{\sqrt{\varepsilon_1} + \sqrt{\varepsilon_2}} k_{11}(\sqrt{\varepsilon_2} z + \sigma_5(x,y)\nonumber\\
		&\qquad\qquad\qquad\qquad\qquad\qquad,-\sqrt{\varepsilon_1}z + \sigma_5(x,y)) \nonumber\\
		&\qquad\qquad\qquad\qquad \times\hat{k}_{21}(\sqrt{\varepsilon_2} z + \sigma_5(x,y), \sqrt{\varepsilon_2} + \sigma_5(x,y)) \nonumber\\
		&\qquad\qquad+ \frac{\lambda_1(-\sqrt{\varepsilon_1}z + \sigma_5(x,y))+ c_2}{2\sqrt{\varepsilon_2}} \nonumber\\
		&\qquad\times\int_{-\sqrt{\varepsilon_1}z + \sigma_5(x,y)}^{\sqrt{\varepsilon_2}z + \sigma_5(x,y)} \bigg(\check{k}_{21}(\xi,-\sqrt{\varepsilon_1}z + \sigma_5(x,y)) \nonumber\\
		&\qquad\qquad\qquad\qquad+ \hat{k}_{21}(\xi,-\sqrt{\varepsilon_1} z+\sigma_5(x,y))\bigg) d\xi\bigg] dz \\
		&\check{I}_{22}[\hat{k}_{22},\check{k}_{22},\hat{k}_{21}](x,y) \nonumber\\
		&\quad:= \int_0^{\frac{x-y}{2\sqrt{\varepsilon_2}}} \bigg[ - \frac{\varepsilon_2 - \varepsilon_1}{\sqrt{\varepsilon_1} + \sqrt{\varepsilon_2}} \nonumber\\
		&\qquad\qquad\qquad \times k_{12}\left(\sqrt{\varepsilon_2} z + \frac{x+y}{2},-\sqrt{\varepsilon_2}z + \frac{x+y}{2}\right) \nonumber\\
		&\qquad\qquad\qquad \times\hat{k}_{21}\left(\sqrt{\varepsilon_2} z + \frac{x+y}{2}, \sqrt{\varepsilon_2} + \frac{x+y}{2}\right) \nonumber\\
		&\qquad\qquad+ \frac{\lambda_2\left(-\sqrt{\varepsilon_2}z + \frac{x+y}{2}\right)+ c_2}{2\sqrt{\varepsilon_2}} \nonumber\\
		&\qquad\qquad\quad\times\bigg(\int_{-\sqrt{\varepsilon_2}z + \frac{x+y}{2}}^{\sqrt{\varepsilon_2}z + \frac{x+y}{2}} \bigg(\check{k}_{22}\left(\xi,-\sqrt{\varepsilon_2}z + \frac{x+y}{2}\right) \nonumber\\
		&\qquad\qquad\qquad\qquad+ \hat{k}_{21}\left(\xi,-\sqrt{\varepsilon_2} z + \frac{x+y}{2}\right)\bigg) d\xi \bigg) \bigg] dz
	\end{align}

	\begin{figure}[tb]
		  \centering
			\includegraphics[width=\linewidth]{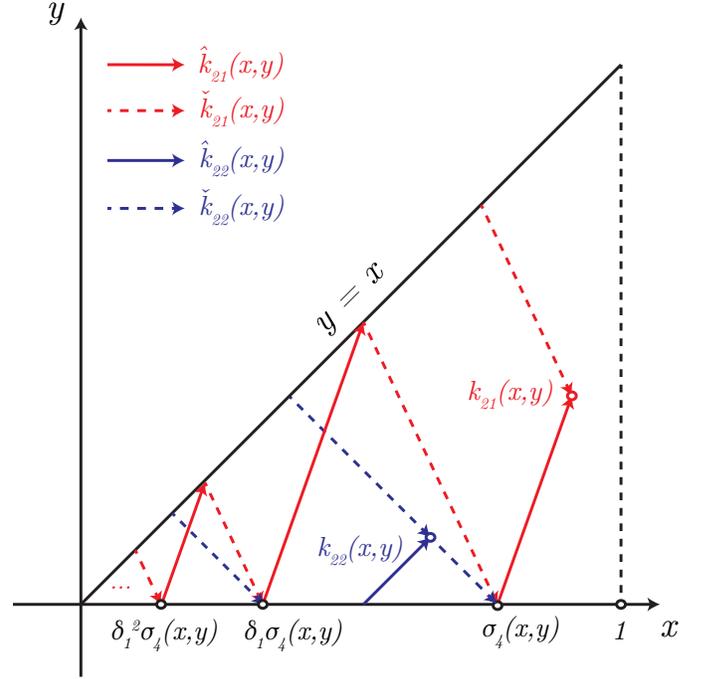}
			\caption{Characteristics of $\hat{k}_{21},\hat{k}_{22},\check{k}_{21},\check{k}_{22}$ featuring an infinite number of reflection boundary conditions.}
			\label{fig:k21}
	\end{figure}

	From enforcing \eqref{eq:k21_reflection1}-\eqref{eq:k21_bc_end} on \eqref{eq:kh_21_inteq}-\eqref{eq:kc_21_inteq} recursively, one can eventually arrive at an integral equation system representation for $(\hat{k}_{21},\hat{k}_{22},\check{k}_{21},\check{k}_{22})$ involving infinite sums of integral operators. The infinite sums appear due to the reflection boundary conditions \eqref{eq:k21_reflection1},\eqref{eq:k21_reflection2} observed in the system.
	\begin{align}
		&\hat{k}_{21}(x,y) = \nonumber\\
		&\quad \lim_{n \rightarrow \infty} \bigg[ - \delta_1^n\delta_2^{n+1} \check{k}_{21} \left( \delta_1^n \sigma_4(x,y),0\right) \nonumber\\
		&\qquad\qquad+ \delta_1^n \delta_2^n \frac{2 a^3}{1 + a^2} \check{k}_{22}(\delta_1^n \sigma_4(x,y),0)\bigg] \nonumber\\
		&\quad- \sum_{n=0}^{\infty} \bigg[ \delta_1^n \delta_2^n \frac{2 a^3}{1 + a^2} \bigg(\frac{\lambda_2(\delta_1^n \sigma_4(x,y))+c_2}{2\sqrt{\varepsilon_2}} \nonumber\\
		&\qquad + \int_0^{\frac{1}{2\sqrt{\varepsilon_2}}\delta_1^n \sigma_4(x,y)} \bigg(\frac{\lambda_2(-\sqrt{\varepsilon_1}z + \frac{1}{2} \delta_1^n \sigma_4(x,y))+c_2}{2\sqrt{\varepsilon_2}} \nonumber\\
		&\qquad\quad\times\int_0^{-\sqrt{\varepsilon_2}z + \frac{1}{2} \delta_1^n \sigma_4(x,y)} (\lambda_2(\xi) + c_2) d\xi\bigg)dz \bigg)\bigg] \nonumber\\
		&\quad+ \sum_{n=0}^{\infty} \bigg[\delta_1^n \delta_2^n \hat{I}_{21}[\hat{k}_{21},\check{k}_{21}](\delta_3^n \sigma_4(x,y), \delta_3^n \sigma_4(x,y)) \nonumber\\
		&\qquad- \delta_1^n \delta_2^{n+1} \check{I}_{21}[\hat{k}_{21},\check{k}_{21}](\delta_1^n \sigma_4(x,y),0) \nonumber\\
		&\qquad+ \delta_1^n \delta_2^n \frac{2 a^3}{1 + a^2} \check{I}_{22}[\hat{k}_{22},\check{k}_{22},\hat{k}_{21}](\delta_1^n \sigma_4(x,y),0) \bigg] \nonumber\\
		&\quad+ \hat{I}_{21}[\hat{k}_{21},\check{k}_{21}](x,y) \label{eq:kh_21_eq}\\
		&\hat{k}_{22}(x,y) = \nonumber\\
		&\quad \frac{2}{a(1+a^2)} \lim_{n \rightarrow \infty} \bigg[ \delta_1^n \delta_2^n \check{k}_{21}(\delta_1^n (x-y),0) \bigg] \nonumber\\
		&\quad- \delta_2 \left( \frac{\lambda_2\left(\frac{x-y}{2}\right)+c_2}{2\sqrt{\varepsilon_2}}\right) \nonumber\\
		&\quad+ \frac{4a^2}{(1+a^2)^2} \sum_{n=1}^{\infty}  \bigg[(-1)^n \delta_1^n \delta_2^n \bigg(\frac{\lambda_2(\delta_1^n(x-y))+c_2}{2\sqrt{\varepsilon_2}} \nonumber\\
		&\qquad-\int_0^{\delta_3^n\frac{x-y}{2\sqrt{\varepsilon_2}}} \bigg[ \frac{\lambda_2\left(-\sqrt{\varepsilon_2} z + \delta_3^n \frac{x-y}{2}\right)+ c_2}{2\sqrt{\varepsilon_2}} \nonumber\\
		&\qquad\qquad\qquad\times\int_0^{-\sqrt{\varepsilon_2} z + \delta_3^n\frac{x-y}{2}} (\lambda_2(\xi) + c_2 )d\xi \bigg]dz\bigg)\bigg] \nonumber\\
		&\quad+ \frac{2}{a(1+a^2)} \sum_{n=1}^{\infty} \bigg[ (-1)^n \delta_1^n \delta_2^{n-1} \nonumber\\
		&\qquad\qquad\qquad\times\hat{I}_{21}[\hat{k}_{21},\check{k}_{21}](\delta_3^n (x-y),\delta_3^n (x-y)) \nonumber\\
		&\qquad + \delta_1^{n-1} \delta_2^{n-1} \check{I}_{21}[\hat{k}_{21},\check{k}_{21}](\delta_1^n (x-y),0) \nonumber\\
		&\qquad + (-1)^n \delta_1^n \delta_2^{n-1} \frac{2a}{1+a^3} \check{I}_{22}[\hat{k}_{22},\check{k}_{22},\hat{k}_{21}](\delta_3^n (x-y),0)\bigg] \nonumber\\
		&\quad+ \delta_2 \check{I}_{22}[\hat{k}_{22},\check{k}_{22},\hat{k}_{21}](x-y,0) + \hat{I}_{22}[\hat{k}_{22},\check{k}_{22},\hat{k}_{21}](x,y) \nonumber\\
		&\quad-\int_0^{\frac{y}{\sqrt{\varepsilon_2}}} \bigg[ \frac{\lambda_2(\sqrt{\varepsilon_2}z)+ c_2}{2\sqrt{\varepsilon_2}}\int_0^{\sqrt{\varepsilon_2} z} (\lambda_2(\xi) + c_2 )d\xi \bigg]dz \nonumber\\
		&\quad-\int_0^{\frac{x-y}{2\sqrt{\varepsilon_2}}} \delta_2 \bigg[ \frac{\lambda_2\left(-\sqrt{\varepsilon_2} z + \frac{x-y}{2}\right)+ c_2}{2\sqrt{\varepsilon_2}} \nonumber\\
		&\qquad\qquad\qquad\times\int_0^{-\sqrt{\varepsilon_2} z + \frac{x-y}{2}} (\lambda_2(\xi) + c_2 )d\xi \bigg]dz \label{eq:kh_22_eq}\\
		&\check{k}_{21}(x,y) = \nonumber\\
		&\quad \lim_{n \rightarrow \infty} \bigg[\delta_1^n \delta_2^n \check{k}_{21}(\delta_1^n \sigma_5(x,y), \delta_1^n \sigma_5(x,y)) \bigg] \nonumber\\
		&\quad + \sum_{n=0}^{\infty} \bigg[ \delta_1^{n+1} \delta_2^n \bigg( \frac{\lambda_2\left(\frac{1}{2}\frac{\delta_1}{\delta_3} \delta_1^n \sigma_5(x,y) \right) + c_2}{2\sqrt{\varepsilon_2}} \nonumber\\
		&\quad-\int_0^{\frac{\delta_1}{\delta_3}\delta_1^n \frac{\sigma_5(x,y)}{2\sqrt{\varepsilon_2}}} \bigg[ \frac{\lambda_2\left(-\sqrt{\varepsilon_2} z + \frac{\delta_1}{\delta_3}\delta_1^n \frac{\sigma_5(x,y)}{2}\right)+ c_2}{2\sqrt{\varepsilon_2}} \nonumber\\
		&\qquad\qquad\qquad\times\int_0^{-\sqrt{\varepsilon_2} z + \frac{\delta_1}{\delta_3}\delta_1^n \frac{\sigma_5(x,y)}{2}} (\lambda_2(\xi) + c_2 )d\xi \bigg]dz \bigg)\bigg] \nonumber\\
		&\quad + \sum_{n=0}^{\infty} \bigg[ - \delta_1^{n+1} \delta_2^n \hat{I}_{21}[\hat{k}_{21},\check{k}_{21}](\delta_1^n \sigma_5(x,y), \delta_1^n \sigma_5(x,y)) \nonumber\\
		&\qquad - \frac{2a^3}{1+a^2} \delta_1^{n+1} \delta_2^n \check{I}_{22}[\hat{k}_{22},\check{k}_{22},\hat{k}_{21}]\left(\frac{\delta_1}{\delta_3}\delta_1^{n} \sigma_5(x,y),0\right) \nonumber\\
		&\qquad  +\delta_1^{n+1} \delta_2^{n+1} \check{I}_{21}[\check{k}_{21},\hat{k}_{21}]\left(\frac{\delta_1}{\delta_3}\delta_1^{n} \sigma_5(x,y),0\right)\bigg] \nonumber\\
		&\quad + \check{I}_{21}[\hat{k}_{21},\check{k}_{21}](x,y) \label{eq:kc_21_eq}\\
		&\check{k}_{22}(x,y) = \nonumber\\
		&\quad -\frac{\lambda_2(\frac{x+y}{2})+c_2}{2\sqrt{\varepsilon_2}} + \check{I}_{22}[\hat{k}_{22},\check{k}_{22},\hat{k}_{21}](x,y) \nonumber\\
		&\quad-\int_0^{\frac{x-y}{2\sqrt{\varepsilon_2}}} \bigg[ \frac{\lambda_2\left(-\sqrt{\varepsilon_2} z + \frac{x+y}{2}\right)+ c_2}{2\sqrt{\varepsilon_2}} \nonumber\\
		&\qquad\qquad\qquad\times\int_0^{-\sqrt{\varepsilon_2} z + \frac{x+y}{2}} (\lambda_2(\xi) + c_2 )d\xi \bigg]dz \label{eq:kc_22_eq}
	\end{align}
	where $\delta_1,\delta_2$ are defined
	\begin{align}
		\delta_1 &= \frac{\sqrt{\varepsilon_1} - \sqrt{\varepsilon_2}}{\sqrt{\varepsilon_1}+\sqrt{\varepsilon_2}} \\
		\delta_2 &= \frac{1-a^2}{1+a^2} \\
		\delta_3 &= \frac{\sqrt{\varepsilon_1}}{\sqrt{\varepsilon_1} + \sqrt{\varepsilon_2}}
	\end{align}
	Since $a < 1, \varepsilon_1 > \varepsilon_2$ as per Assumption \ref{assum:order}, the coefficients $\delta_{1,2,3} \in (0,1)$. It is unclear initially whether the limit and infinite sum terms are convergent, however, as one may notice from Figure \ref{fig:k21}, the contracting volume of integration and the reflection coefficients (appearing in the $\delta_i$ coefficients) will guarantee convergence.

	Like in the proof of Lemma \ref{lem:wellposedk1}, we will define the integral equations \eqref{eq:kh_21_eq}-\eqref{eq:kc_22_eq} in terms of operators for notational compactness. Let $k_2 := \begin{pmatrix} \hat{k}_{21},\hat{k}_{22},\check{k}_{21},\check{k}_{22} \end{pmatrix}$, and
	\begin{align}
		k_2 &= I_3[k_2](x,y) + \Theta[k_2](x,y) + \Psi_3(x,y) \label{eq:k2_int_eq}
	\end{align}
	where $I_3$ is the operator involving the integral operators $\hat{I}_{2i},\check{I}_{2i}$, $\Theta$ is the operator involving limits, and $\Psi_3$ collects the terms independent of $\hat{k}_{2i},\check{k}_{2i}$. We establish an iteration $k_{2,n}$ as
	\begin{align}
		k_{2,n+1} &= I_3[k_{2,n}](x,y) + \Theta[k_{2,n}](x,y) + \Psi_3(x,y) \label{eq:k2_iter}
	\end{align}
	with the iteration residual $\Delta k_{2,n} := k_{2,n+1} - k_{2,n}$ defining the iteration
	\begin{align}
		\Delta k_{2,n+1} &= I_3[\Delta k_{2,n}](x,y) + \Theta[\Delta k_{2,n}](x,y) \label{eq:k2d_iter}
	\end{align}
	We note that \eqref{eq:k2_int_eq} is a continuous mapping over the complete (convex) metric space of bounded continuous functions (via the Schauder fixed point theorem), and make the following statement:
	\begin{align}
		\lim_{n \rightarrow \infty} k_{2,n} = k_{2,0} + \sum_{n=0}^{\infty} \Delta k_{2,n} = k_2 \label{eq:k2_lim}
	\end{align}
	Supposing that $k_{2,0} = 0$,
	\begin{align}
		\norm{\Delta k_{2,0}}_1 = \norm{\Psi_3}_1 &\leq \bigg( \frac{\bar{\lambda}+c_2}{2 \sqrt{\varepsilon_2}} + \frac{1}{2} \bigg( \frac{\bar{\lambda}+c_2}{2 \sqrt{\varepsilon_2}} \bigg)^2\bigg) \nonumber\\
		&\quad \times \bigg( 1 + \frac{1}{1-\delta_1 \delta_2} + \frac{2a^3}{1+a^2}\frac{1}{1-\delta_1\delta_2}\nonumber\\
		&\qquad\qquad + \frac{4a^2}{(1+a^2)^2}\frac{1}{1-\delta_1\delta_2}\bigg) \nonumber\\
		&\leq \bar{\Psi}_{3,0}
	\end{align}
	From iterating $\norm{\Delta k_{2,0}}_1$ through \eqref{eq:k2d_iter}, one can achieve successive bounds on $\Delta k_{2,n}$:
	\begin{align}
		\norm{\Delta k_{2,n}}_1 &\leq \frac{1}{n!} \bigg[ 3\bigg(\bigg(1 + \frac{2}{1-\delta_1 \delta_2}\bigg) + \frac{2}{a(1+a^2)} \frac{1}{1-\delta_1\delta_2}\bigg) \nonumber\\
		&\qquad\times\bigg(a^{-1}\norm{\begin{pmatrix} k_{11} \\ k_{12} \end{pmatrix}}_{1,L^\infty} + \frac{\bar{\lambda} + c_2}{\varepsilon_2}\bigg)  \bigg]^n \bar{\Psi}_{3,0} x^n
	\end{align}
	We remark that it is easy to see for any polynomial bound $\norm{\Delta k_{2,n}}_1$, $\norm{\Theta[\Delta k_{2,n}](x,y)}_1 \leq 0$ due to continuity (and boundedness). Then, via \eqref{eq:k2_lim} and noting $x \in [0,1]$, we can arrive at the following bound on $k_2$:
	\begin{align}
		\norm{k_2}_{1} \leq \sum_{n=0}^{\infty} \norm{\Delta k_{2,n}}_1 \label{eq:k2_gain_bound}
	\end{align}
	which is the power series representation of an exponential bound. The regularity of the solution $k_2$ is also derived from $\eqref{eq:k2_lim}$, where noting that the initial choice of $k_{2,0} = 0$ admits $\Delta k_{2,0} = \Psi_{3}$, which is $C^1(\mathcal{T})$ as it involves sums of $\lambda_2 \in C^1([0,1])$. Then from \eqref{eq:k21_invtfm},\eqref{eq:k22_invtfm}, a single integration yields $k_{21},k_{22} \in C^2(\mathcal{T})$.

\end{proof}

\subsection{Well-posedness of $p,q,r$}
As aformentioned, the $(p,q,r)$-system of kernel PDEs comprise a fairly interesting structure, the heart of which is a wave equation with an interface, whereby forcing is introduced via the differential transmission condition \eqref{eq:pq_if_d} at the interface. It is quite trivial to see that if one can show a solution exists for $q$, then necessarily, a solution $p$ must exist as well.

To faciliate the study of the kernels, we will apply the Riemann invariant transformation as before found in the $K$ kernel. As $q,r$ share congruent characteristics, the solution method is much more straightforward and involves tracing characteristics through the square $\mathcal{T} \cup \mathcal{T}_{\textrm{u}}$.

In this section, we have used the relation \eqref{eq:p_orig} to reduce the $(p,q,r)$ system to $(q,r)$, albeit at the cost of introducing trace terms into the $q$-PDE.

We begin by apply the following definition to derive the Rienmann invariants:
\begin{align}
	\hat{q}(x,y) &= \sqrt{\varepsilon_2} \partial_x q(x,y) - \sqrt{\varepsilon_1} \partial_y q(x,y) \\
	\check{q}(x,y) &= \sqrt{\varepsilon_2} \partial_x q(x,y) + \sqrt{\varepsilon_1} \partial_y q(x,y)
\end{align}
which admit the following coupled PDEs for $(\hat{q},\check{q})$ defined on $\mathcal{T}$:
\begin{align}
	\sqrt{\varepsilon_2} \partial_x \hat{q}(x,y) + \sqrt{\varepsilon_1} \partial_y \hat{q}(x,y) &= I_q[\hat{q},\check{q}](x,y) \label{eq:qhat} \\
	\sqrt{\varepsilon_2} \partial_x \check{q}(x,y) - \sqrt{\varepsilon_1} \partial_y \check{q}(x,y) &= I_q[\hat{q},\check{q}](x,y) \label{eq:qchk}
\end{align}
where the operator $I_q[\hat{q},\check{q}]$ is a linear integral operator defined as
\begin{align}
	I_q[\hat{q},\check{q}](x,y) &= \frac{c_2 - c_1}{2\sqrt{\varepsilon_2}} \int_0^x \check{q}(z,0) dz \nonumber\\
	&\quad+ \frac{c_2 - c_1}{2\sqrt{\varepsilon_1}} \int_0^y (\check{q}(x,z) - \hat{q}(x,z)) dz \nonumber\\
	&\quad+ \frac{a^{-1} g[k_{21}](y)}{2\sqrt{\varepsilon_2}} \int_0^{x-y} \check{q}(z,0) dz
\end{align}

In a similar manner, we define the Riemann invariants for $r$ on $\mathcal{T}_{\textrm{u}}$:
\begin{align}
	\hat{r}(x,y) &= \sqrt{\varepsilon_2} \partial_x r(x,y) - \sqrt{\varepsilon_1} \partial_y r(x,y) \\
	\check{r}(x,y) &= \sqrt{\varepsilon_2} \partial_x r(x,y) + \sqrt{\varepsilon_1} \partial_y r(x,y)
\end{align}
which admits the coupled PDE:
\begin{align}
	\sqrt{\varepsilon_2} \partial_x \hat{r}(x,y) + \sqrt{\varepsilon_1} \partial_y \hat{r}(x,y) &= I_r[\hat{r},\check{r}](x,y) \label{eq:rhat} \\
	\sqrt{\varepsilon_2} \partial_x \check{r}(x,y) - \sqrt{\varepsilon_1} \partial_y \check{r}(x,y) &= I_r[\hat{r},\check{r}](x,y) \label{eq:rchk}
\end{align}
where $I_r[\hat{r},\check{r}]$ is a linear integral operator defined as
\begin{align}
	I_r[\hat{r},\check{r}](x,y) &= \frac{c_2 - c_1}{2\sqrt{\varepsilon_2}} \int_0^x (\hat{r}(z,y) + \check{r}(z,y)) dz
\end{align}

The PDEs given by \eqref{eq:qhat},\eqref{eq:qchk},\eqref{eq:rhat},\eqref{eq:rchk} are subject to the following boundary conditions, which consist of transmission and reflection boundary conditions:
\begin{align}
	\hat{q}(x,0) &= 0 \\
	\check{q}(x,x) &= \check{r}(x,x) - (\sqrt{\varepsilon_1} + \sqrt{\varepsilon_2})^{-1} g[k_{21}](x) \\
	\hat{r}(0,y) &= \check{r}(0,y) = 0 \\
	\hat{r}(x,x) &= \hat{q}(x,x) - (\sqrt{\varepsilon_1} - \sqrt{\varepsilon_2})^{-1} g[k_{21}](x) \\
	\check{r}(x,1) &= -\hat{r}(x,1)
\end{align}
An additional condition employed implicity in the derivation of $(\hat{q},\check{q},\hat{r},\check{r})$ is the following point condition:
\begin{align}
	q(0,0) &= r(0,0) = 0
\end{align}

\begin{lemma}
	The system of first-order hyperbolic PDE \eqref{eq:qhat},\eqref{eq:qchk},\eqref{eq:rhat},\eqref{eq:rchk} with associated boundary conditions admit a unique set of solutions $(\hat{q},\check{q}) \in C^1(\mathcal{T})$ and $(\hat{r},\check{r}) \in C^1(\mathcal{T}_\textrm{u})$.

	\label{lem:wellposedqr}
\end{lemma}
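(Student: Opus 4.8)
The plan is to reuse the method-of-characteristics / successive-approximations scheme from Lemmas~\ref{lem:wellposedk1} and~\ref{lem:wellposedk2}, exploiting two features that make this system strictly easier. First, by Lemma~\ref{lem:wellposedk2} the forward kernel $k_{21}$ already lies in $C^2(\mathcal{T})$, so the trace $g[k_{21}]$ is here a \emph{known}, bounded, $C^1$ forcing rather than an unknown trace of the solution as it was in Lemma~\ref{lem:wellposedk2}; consequently the operators $I_q,I_r$ and all of the transmission and reflection data are linear in the unknowns with known inhomogeneous parts. Second, the $\hat{\cdot}$ and $\check{\cdot}$ invariants of $q$ and $r$ carry the \emph{same} characteristic slopes $\pm\sqrt{\varepsilon_1/\varepsilon_2}=\pm a^{-1}$, so a characteristic arriving at the interface $y=x$ is \emph{transmitted} into the adjacent triangle rather than reflected. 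The only genuine reflections therefore occur at the physical edges $y=0$ and $y=1$, and since for a fixed $a\in(0,1)$ each reflected segment advances toward the edge $x=0$ by a fixed positive amount, every characteristic reaches data of known value after finitely many bounces --- there are no infinite sums of the kind encountered in Lemma~\ref{lem:wellposedk2}.

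Concretely, I would integrate each of~\eqref{eq:qhat}--\eqref{eq:rchk} along its characteristics through the square $\mathcal{T}\cup\mathcal{T}_\textrm{u}$, gluing the $q$- and $r$-pieces across $y=x$ via the transmission relations $\check{q}(x,x)=\check{r}(x,x)-(\sqrt{\varepsilon_1}+\sqrt{\varepsilon_2})^{-1}g[k_{21}](x)$ and $\hat{r}(x,x)=\hat{q}(x,x)-(\sqrt{\varepsilon_1}-\sqrt{\varepsilon_2})^{-1}g[k_{21}](x)$, reflecting through $\check{r}(x,1)=-\hat{r}(x,1)$, terminating on the homogeneous data $\hat{q}(x,0)=0$ and $\hat{r}(0,y)=\check{r}(0,y)=0$, and fixing the free constant at the corner with $q(0,0)=r(0,0)=0$. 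This produces a closed affine Volterra system for the four-vector $v:=(\hat{q},\check{q},\hat{r},\check{r})$ of the form $v=\Psi+I[v]$, where $\Psi$ collects the bounded $C^1$ contributions of $g[k_{21}]$ and $I$ is a linear integral operator whose kernels are bounded because $\lambda$ and $c_1,c_2$ are bounded and the domain is compact.

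I would then run the iteration $v_{n+1}=\Psi+I[v_n]$ with $v_0=0$, so that the increments $\Delta v_n:=v_{n+1}-v_n$ satisfy $\Delta v_{n+1}=I[\Delta v_n]$. Because $I$ is causal in $x$ on the compact square, an induction gives the factorial estimate $\norm{\Delta v_n}_1\leq \frac{(Mx)^n}{n!}\,\norm{\Psi}_1$, with $M$ depending on $\bar{\lambda},c_1,c_2,\varepsilon_1,\varepsilon_2$ and the finitely many reflection coefficients; the telescoping series $v=\sum_{n}\Delta v_n$ then converges uniformly on $\mathcal{T}\cup\mathcal{T}_\textrm{u}$, which by the Schauder fixed point argument (as in Lemma~\ref{lem:wellposedk1}) yields a continuous solution. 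Uniqueness is immediate, since the difference $w$ of two solutions obeys $w=I[w]$ and is hence bounded by the same null sequence. Regularity follows by inspecting the limit: $\Psi\in C^1$ (as $g[k_{21}]\in C^1$ and $\lambda\in C^1$) and $C^1$ is preserved by the integral iteration, giving $(\hat{q},\check{q})\in C^1(\mathcal{T})$ and $(\hat{r},\check{r})\in C^1(\mathcal{T}_\textrm{u})$; a single integration back through the Riemann-invariant definitions, together with~\eqref{eq:p_orig}, then lifts $q$ (and $p,r$) to $C^2$.

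The step I expect to be the main obstacle is the bookkeeping of the first two paragraphs: correctly enumerating, for a point in either triangle, the finite sequence of diagonal transmissions and edge reflections that its characteristic undergoes before reaching zero data at $x=0$ or $y=0$, and verifying that the assembled operator $I$ is genuinely Volterra so that the \emph{factorial} bound --- not merely a geometric one --- holds. In particular one must confirm that the transmission data are compatible with the corner condition $q(0,0)=r(0,0)=0$, so that no jump is introduced at the interface; this is precisely the place where the claimed $C^1$ regularity of $(\hat{q},\check{q},\hat{r},\check{r})$ --- and hence the $C^2$ regularity of $q$ needed in Theorem~\ref{thm:statefdback} --- could break down.
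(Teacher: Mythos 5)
Your proposal follows essentially the same route as the paper's proof: method of characteristics on the congruent $q$/$r$ characteristics with transmission at $y=x$, reflection at $y=1$, and termination on the zero data, assembled into an affine Volterra system $\rho = I[\rho]+\Psi$ with $\Psi$ carrying the known $g[k_{21}]$ forcing, then successive approximations from $\rho_0=0$ with factorial bounds, Schauder-type convergence, and uniqueness by linearity. The only differences are presentational --- you make explicit the finite-bounce bookkeeping and the $C^1$/$C^2$ regularity bootstrapping that the paper treats tersely --- so no gap to report.
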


\begin{proof}
We can directly apply the method of characteristics to \eqref{eq:qhat},\eqref{eq:qchk},\eqref{eq:rhat},\eqref{eq:rchk} to recover the following linear integral equations:
\begin{align}
	\hat{q}(x,y) &= \int_0^{\frac{y}{\sqrt{\varepsilon_1}}} I_q[\hat{q},\check{q}](\sigma_6(x,y) + \sqrt{\varepsilon_2} z,\sqrt{\varepsilon_1}z) dz \label{eq:qhat_ie}\\
	\check{q}(x,y) &= \check{r}(\sigma_7(x,y),\sigma_7(x,y)) \nonumber\\
	&\quad- (\sqrt{\varepsilon_1} + \sqrt{\varepsilon_2})^{-1} g[k_{21}](\sigma_7(x,y)) \nonumber\\
	&\quad+ \int_0^{\frac{x - y}{\sqrt{\varepsilon_1} + \sqrt{\varepsilon_2}}} I_q[\hat{q},\check{q}](\sqrt{\varepsilon_2}z + \sigma_7(x,y), \nonumber\\&\qquad\qquad\qquad\qquad\quad -\sqrt{\varepsilon_1} z + \sigma_7(x,y)) dz \label{eq:qchk_ie}
\end{align}
where $\sigma_6, \sigma_7, \sigma_8$ are defined as
\begin{align}
	\sigma_6(x,y) &= x - \sqrt{\frac{\varepsilon_2}{\varepsilon_1}} y \\
	\sigma_7(x,y) &= \frac{\sqrt{\varepsilon_1} x + \sqrt{\varepsilon_2} y}{\sqrt{\varepsilon_1} + \sqrt{\varepsilon_2}}
\end{align}
while for $\hat{r},\check{r}$, we recover piecewise defined linear integral equations which arises due to the mixing of initial and boundary conditions.
\begin{align}
	\hat{r}(x,y) &=
	 	\begin{cases} \hat{r}_l(x,y) & x \leq \sqrt{\varepsilon_2 / \varepsilon_1} y \\ 0 & x > \sqrt{\varepsilon_2 / \varepsilon_1} y \end{cases} \label{eq:rhat_ie}\\
	\check{r}(x,y) &=
		\begin{cases} \check{r}_u(x,y) & x \leq \sqrt{\varepsilon_2 / \varepsilon_1} \\
			\check{r}_l(x,y) & \sqrt{\varepsilon_2 / \varepsilon_1} < x \leq \sqrt{\varepsilon_2 / \varepsilon_1} \\
			0 & x < \sqrt{\varepsilon_2 / \varepsilon_1} y \label{eq:rchk_ie}
		\end{cases}
\end{align}
where
\begin{align}
	\hat{r}_l(x,y) &= \hat{q}(\sigma_8(x,y),\sigma_8(x,y)) \nonumber\\
	&\quad- (\sqrt{\varepsilon_1} - \sqrt{\varepsilon_2})^{-1} g[k_{21}](\sigma_8(x,y)) \nonumber\\
	&\quad+ \int_0^{\frac{y-x}{\sqrt{\varepsilon_1}-\sqrt{\varepsilon_2}}} I_r[\hat{r},\check{r}](\sqrt{\varepsilon_2}z + \sigma_8(x,y),\nonumber\\&\qquad\qquad\qquad\qquad\quad \sqrt{\varepsilon_1}z + \sigma_8(x,y)) dz \\
	\check{r}_u(x,y) &= -\hat{r}(\sigma_9(x,y),1) \nonumber\\
	&\quad+ \int_0^{\frac{1-y}{\sqrt{\varepsilon_1}}} I_r[\hat{r},\check{r}](\sqrt{\varepsilon_2}z + \sigma_9(x,y), \nonumber\\
	&\qquad\qquad\qquad\qquad-\sqrt{\varepsilon_1}z + 1) dz \\
	\check{r}_l(x,y) &= \int_{0}^{\frac{\sqrt{\varepsilon_1} - \sqrt{\varepsilon_2}}{2 \sqrt{\varepsilon_1}}} I_r[\hat{r},\check{r}](\sqrt{\varepsilon_2} z + \sigma_{10}(x,y),\nonumber\\
	&\qquad\qquad\qquad\qquad-\sqrt{\varepsilon_1}z + \sigma_{10}(x,y)) dz
\end{align}
with $\sigma_8,\sigma_9,\sigma_{10}$ defned as
\begin{align}
	\sigma_8(x,y) &= \frac{\sqrt{\varepsilon_1} x - \sqrt{\varepsilon_2} y}{\sqrt{\varepsilon_1} - \sqrt{\varepsilon_2}} \\
	\sigma_9(x,y) &= x + \sqrt{\frac{\varepsilon_2}{\varepsilon_1}} (y - 1) \\
	\sigma_{10}(x,y) &=  x + \sqrt{\frac{\varepsilon_2}{\varepsilon_1}} y
\end{align}

To study the well-posedness $\hat{q},\check{q},\hat{r},\check{r}$ system, it is helpful to study the characteristics geometrically, which are depicted in Figure~\ref{fig:qr}.
\begin{figure}[tb]
		\centering
		\includegraphics[width=\linewidth]{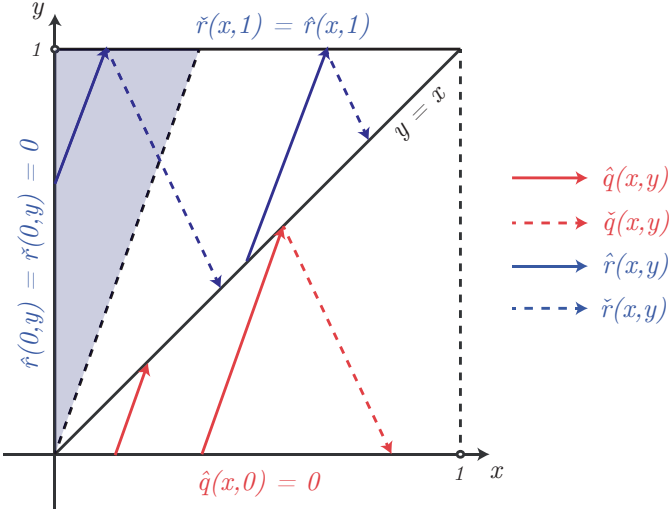}
		\caption{Solution characteristics of $\hat{q},\check{q},\hat{r},\check{r}$. An interface between the solutions exists at $y = x$ defining a jump discontinuity. Because of the initial conditons imposed, $\hat{r} = \check{r} = 0$ in the shaded triangle.}
		\label{fig:qr}
\end{figure}

Much like the analysis of the $K$ kernel, we establish the following operator representation for the affine integral equations that govern $\rho := (\hat{q},\check{q},\hat{r},\check{r})$:
\begin{align}
	\rho &= \Gamma_3[\rho](x,y) := I_4[\rho](x,y) + \Psi_4(x,y)
\end{align}
where analogous to before, $\Gamma_3$ encapsulates the affine integral equations given by \eqref{eq:qhat_ie},\eqref{eq:qchk_ie},\eqref{eq:rhat_ie},\eqref{eq:rchk_ie}. We separate the operator into the linear operator $I_4$ and the constant $\Psi_4$. $\Psi_4$ is evaluated to be
\begin{align}
	\Psi_4(x,y) &:= \begin{pmatrix} 0 &
	\Psi_{4,1}(x,y) &
	\Psi_{4,2}(x,y) &
	\Psi_{4,3}(x,y)
		\end{pmatrix}^T \\
	\Psi_{4,1}(x,y) &= (\sqrt{\varepsilon_1}-\sqrt{\varepsilon_2})^{-1} \nonumber\\
	&\qquad \times g[k_{21}](\sigma_8(\sigma_9(\sigma_7(x,y),\sigma_7(x,y)),1)) \nonumber\\
	&\quad- (\sqrt{\varepsilon_1} + \sqrt{\varepsilon_2})^{-1} g[k_{21}](\sigma_7(x,y)) \nonumber\\
	\Psi_{4,2}(x,y) &= -(\sqrt{\varepsilon_1}-\sqrt{\varepsilon_2})^{-1} g[k_{21}](\sigma_8(x,y)) \nonumber\\
	\Psi_{4,3}(x,y) &= (\sqrt{\varepsilon_1}-\sqrt{\varepsilon_2})^{-1} g[k_{21}](\sigma_8(\sigma_9(x,y),1)) \nonumber
\end{align}
Intuitively, one can understand $\Psi_4$ to represent the nonzero data of the problem. If, perchance the folding point is chosen $y_0 = 0$, then $g[k_{21}] \equiv 0 \Leftrightarrow \Psi_4 \equiv 0$. It is precisely the unmatched artifact from the first transformation, $g[k_{21}]$, that acts as the sole forcing to the $(q,r)$ PDE, as expected.

We carry out the same methodology as for $K$, and establish an iteration $\rho_{k}$ for $n \in \N$:
\begin{align}
	\rho_{n+1} &= I_4[\rho_n](x,y) + \Psi_4(x,y) \label{eq:rho_iter}
\end{align}
The residual $\Delta \rho_{n} := \rho_{n+1} - \rho_n$ will obey the following linear integral equation,
\begin{align}
	\Delta \rho_{n} &= I_4[\Delta \rho_n](x,y) \label{eq:drho_iter}
\end{align}
which arises from abusing the linear property of $I_4$. We note that in the complete space of bounded continuous functions, the iteration \eqref{eq:rho_iter} will converge (via the Schauder fixed point theorem) if we can show uniform Cauchy convergence. The iteration limit thus can be rewritten as an infinite summation:
\begin{align}
	\lim_{n \rightarrow \infty} \rho_n &= \rho_0 + \sum_{n=0}^{\infty} \Delta \rho_n = \rho \label{eq:rho_lim}
\end{align}
It is quite clear from imposing \eqref{eq:rho_iter},\eqref{eq:drho_iter} that by choosing $\rho_0 = 0$, $\Delta \rho_0$ can be computed to be
\begin{align}
	\Delta \rho_0 &= \Psi_4 \nonumber\\
	\Rightarrow \norm{\Delta \rho_0}_1 &= \norm{\Psi_4}_1 \leq \frac{4 \sqrt{\varepsilon_1} + 2 \sqrt{\varepsilon_2}}{\varepsilon_1 - \varepsilon_2} \norm{g[k_{21}]}_{L^\infty} \label{eq:drho_0}
\end{align}
From using \eqref{eq:drho_0} in the iteration \eqref{eq:drho_iter}, one can find the successive bounds on the residuals:
\begin{align}
	\norm{\Delta \rho_{n}}_1 \leq \frac{1}{n!} \left(\frac{2 \varepsilon_2^{-1}(2|c_1 - c_2| + \norm{g[k_{21}]}_{L^{\infty}}}{2} x \right)^{n} \norm{\Psi_4}_1 \label{eq:drho_boundn}
\end{align}
Noting \eqref{eq:rho_lim}, it is quite trivial to see that $\rho$ is bounded (in vector 1-norm) by an exponential. This guarantees the existence of a solution $\rho$, and thus $(\hat{q},\check{q},\hat{r},\check{r})$ admit a solution. In fact, due to the linearity of the PDEs \eqref{eq:qhat},\eqref{eq:qchk},\eqref{eq:rhat},\eqref{eq:rchk}, it is not difficult to show that this solution is also unique.
\end{proof}

\section{Folding point analysis and numerical study}
\label{sec:simulation}

\begin{table}[b]
	\centering
	\begin{tabular}{c|c}\hline
				Parameter             & Value \\ \hline
				$\varepsilon$ & $1$ \\
				$\lambda(x)$ & $-4x^2 - 2 x + 6$ \\
				$y_0$ & $-0.05, -0.30$\\
				$\hat{y}_0$ & $0.05, -0.45$ \\
				$c_1, c_2$ & $5$\\
				$\check{c}_1, \check{c}_2$ & $1$ \\
				\hline
	\end{tabular}
	\vspace{1.2mm}
	\caption{Simulation parameters}
	\label{tab:params}
\end{table}

The parameters chosen for simulation are given in Table \ref{tab:params}. $\lambda$ is specifically chosen to not be symmetric about $x = 0$, and actually attains a maximum at $x = -0.25$. This is motivated by the intuition that choosing a folding point not at the point of symmetry $x = 0$ may afford better performance according a preferred index. It is also important to note that $c_1, c_2, \check{c}_1,\check{c}_2$ all influence the system response in some manner that is not wholly independent from the choices $y_0,\hat{y}_0$.

\subsection{Folding point selection}
\begin{figure}[tb]
	  \centering
		\includegraphics[width=\linewidth]{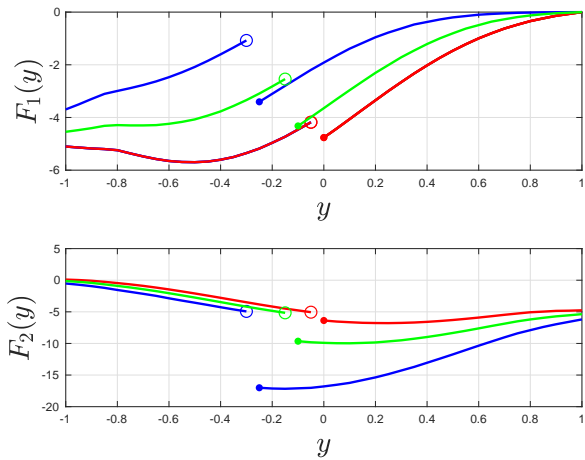}
		\caption{Numerically computed gains for given reaction coefficient $\lambda$ for three separate folding cases: \textbf{(red)} $y_0 = -0.05$, \textbf{(green)} $y_0 = -0.15$, \textbf{(blue)} $y_0 = -0.30$}
		\label{fig:controller_gains}
\end{figure}

It is difficult to directly characterize the size of the controller gains $F_1,F_2$ (defined in \eqref{eq:gain_f1},\eqref{eq:gain_f2}), but one may glean intuition for how the controllers grow based upon what the bounds on the gain kernels suggest.

As one may note from Figure \ref{fig:controller_gains}, the control gains are not necessarily continuous at the selected folding point. Surely, as $y_0 \rightarrow 0$ (the point of symmetry), one recovers the continuous case. However, as the folding point is chosen to be more and more biased (for the same set of target system reaction coefficients $c_i$), one control gain grows smaller (less effort) while the other is magnified (more effort).

Although not provable, the bounds \eqref{eq:k1_gain_bound},\eqref{eq:k2_gain_bound},\eqref{eq:drho_boundn} suggest this behavior as well. In \eqref{eq:k1_gain_bound}, the bound on the controller gains $k_{1i}$ arise as an exponential in $a^3$. In \eqref{eq:k2_gain_bound},\eqref{eq:drho_boundn}, the control gains $k_{2i},p,q$ are parametrized (exponentially as well) in $a^{-1}$.

\subsection{Numerical results for output-feedback}

\begin{figure}[tb]
	  \centering
		\includegraphics[width=\linewidth]{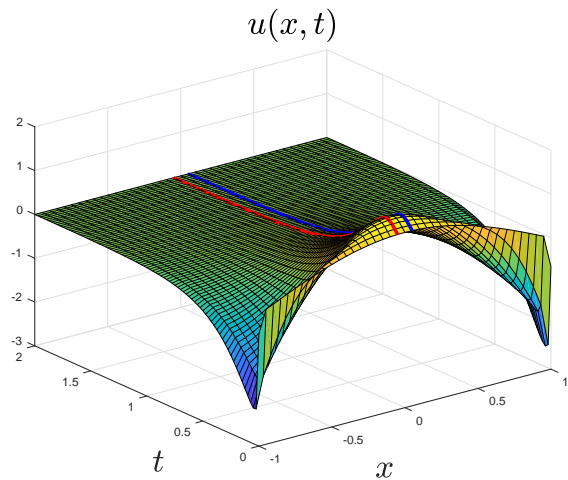}
		\caption{Closed-loop response $u(x,t)$ with folding points chosen to be $y_0 = -0.05, \hat{y}_0 = 0.05$.}
		\label{fig:cl_response_1}
\end{figure}

\begin{figure}[tb]
	  \centering
		\includegraphics[width=\linewidth]{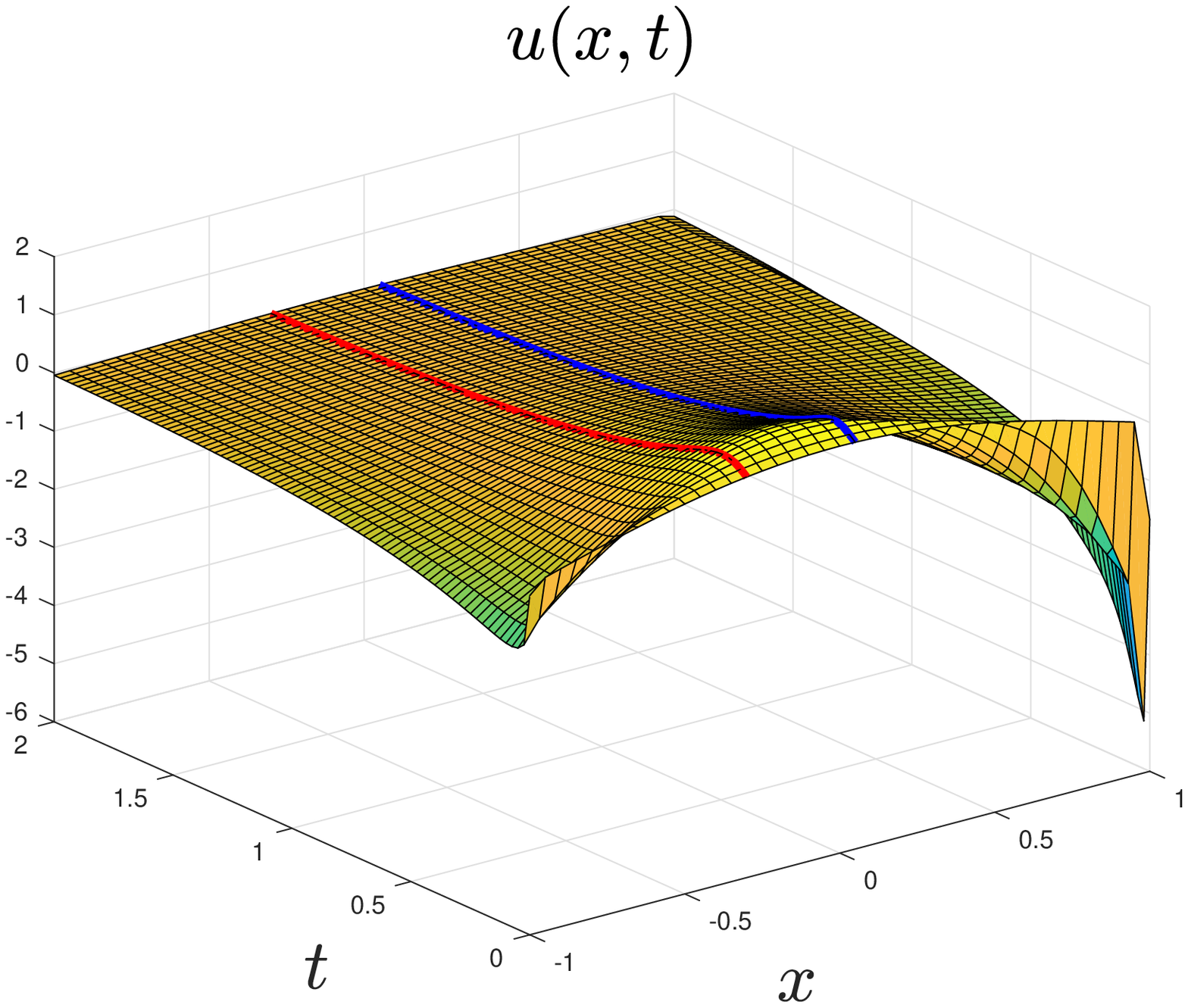}
		\caption{Closed-loop response $u(x,t)$ with folding points chosen to be $y_0 = -0.30, \hat{y}_0 = 0.05$.}
		\label{fig:cl_response_2}
\end{figure}

\begin{figure}[tb]
	  \centering
		\includegraphics[width=\linewidth]{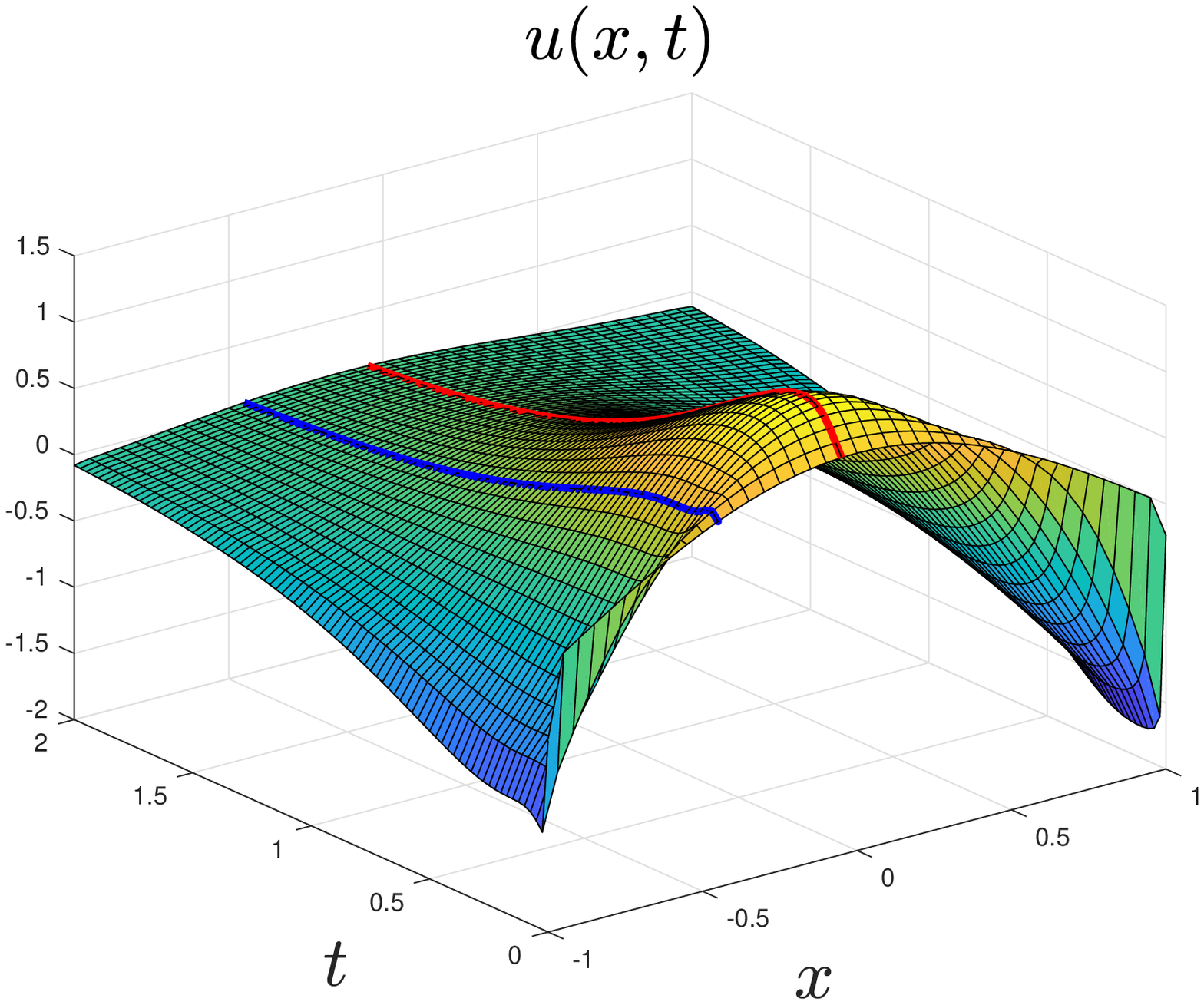}
		\caption{Closed-loop response $u(x,t)$ with folding points chosen to be $y_0 = -0.05, \hat{y}_0 = -0.45$.}
		\label{fig:cl_response_3}
\end{figure}

\begin{figure}[tb]
	  \centering
		\includegraphics[width=\linewidth]{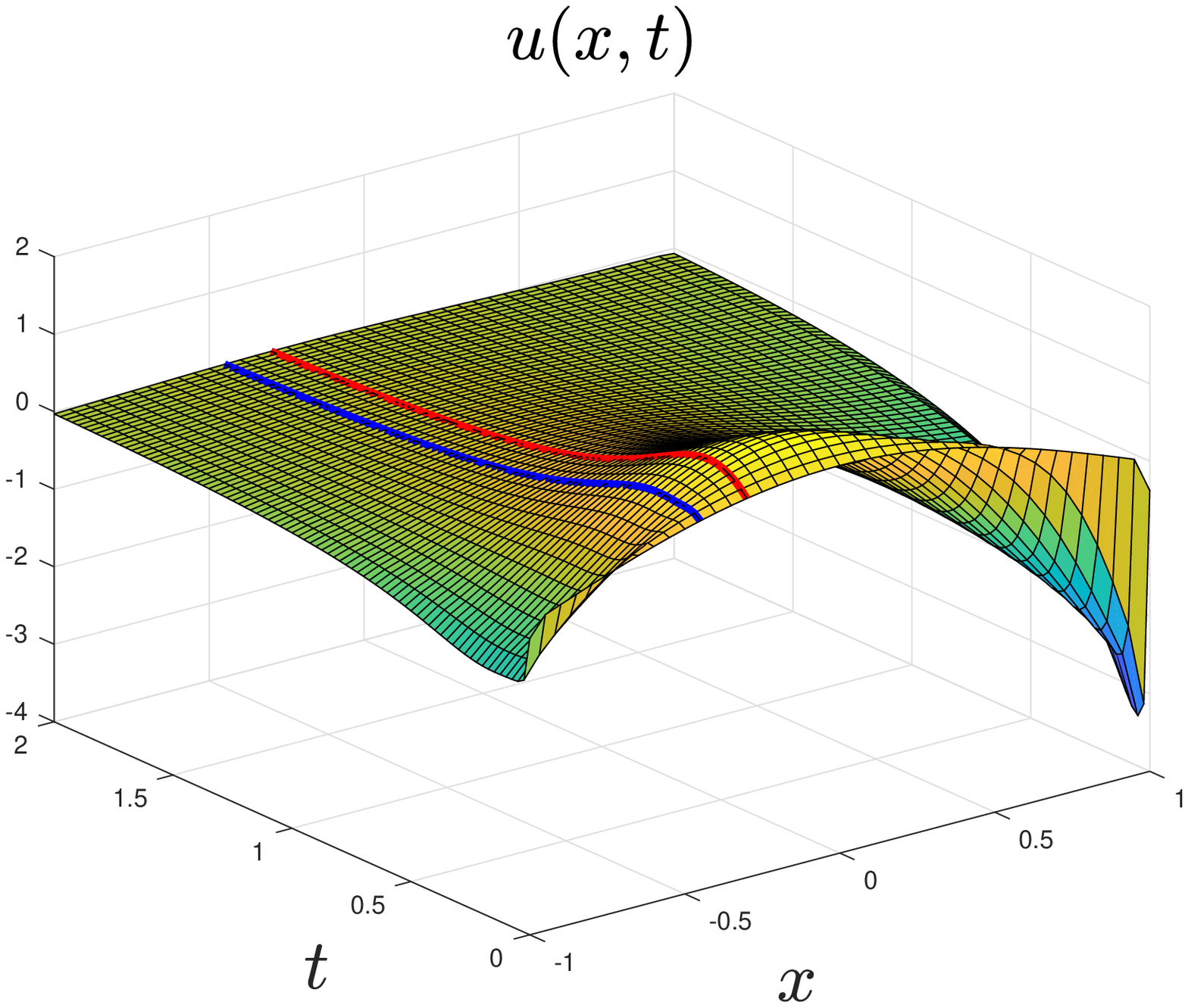}
		\caption{Closed-loop response $u(x,t)$ with folding points chosen to be $y_0 = -0.30, \hat{y}_0 = -0.45$.}
		\label{fig:cl_response_4}
\end{figure}

%
%
%

\begin{figure}[tb]
	  \centering
		\includegraphics[width=\linewidth]{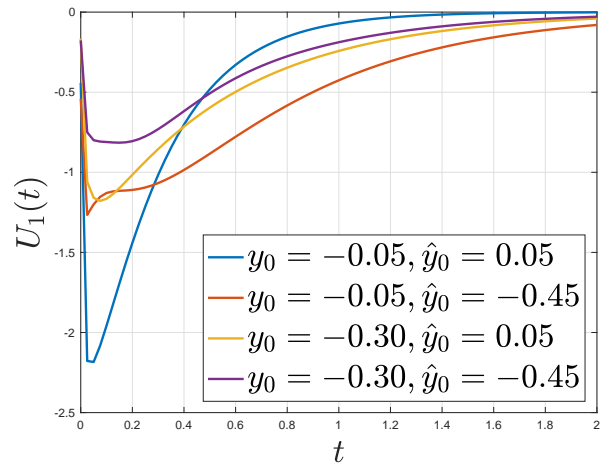}
		\caption{Comparison of control effort by left controller ($\mathcal{U}_1(t)$) over different folding choices}
		\label{fig:control_1}
\end{figure}

\begin{figure}[tb]
	  \centering
		\includegraphics[width=\linewidth]{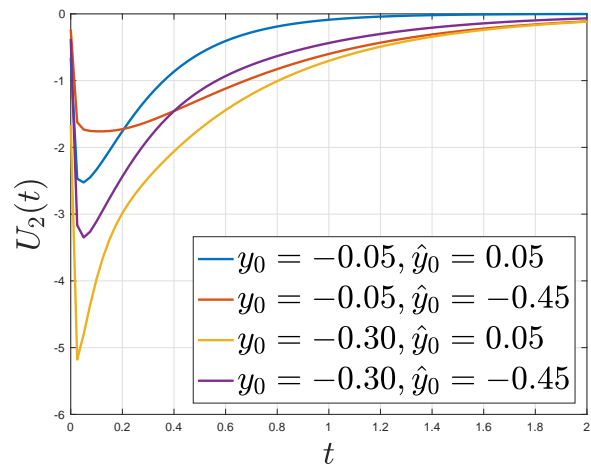}
		\caption{Comparison of control effort by right controller ($\mathcal{U}_2(t)$) over different folding choices}
		\label{fig:control_2}
\end{figure}

Due to the choice of a sufficiently large positive $\lambda$, the open-loop system is unstable and therefore necessitates feedback control. Two choices of control folding points $y_0$ and two choices of measurement points $\hat{y}_0$ are simulated, with the control folding point $y_0$ marked in red and the measurement point $\hat{y}_0$ marked in blue.

Comparing Figures \ref{fig:cl_response_1}, \ref{fig:cl_response_3} with Figurues \ref{fig:cl_response_2},\ref{fig:cl_response_4} gives insight to how changing the control folding point affects the response -- the controller $\mathcal{U}_1$ has a lower peak value in the biased case (Figure \ref{fig:cl_response_2},\ref{fig:cl_response_4}) than that of the close-to-symmetric case (Figure \ref{fig:cl_response_1},\ref{fig:cl_response_3}). However, it is quite clear to note that the controller $\mathcal{U}_2$ pays the cost in having a much higher peak value.

Comparing Figures \ref{fig:cl_response_1},\ref{fig:cl_response_2} with Figures \ref{fig:cl_response_3},\ref{fig:cl_response_4} gives insight to how changing the measurement point $\hat{y}_0$ affects the system response. One can note that the closer the measurement is to the boundary, the performance will improve (uniformly).

The controller responses are given in Figures \ref{fig:control_1},\ref{fig:control_2}. It can be noted that the selection of the control folding point appears to suggest an inherent waterbed effect in $L^2$ versus $L^\infty$ (in time. The numerical simulations suggest that as $y_0 \rightarrow -1$ (the biased case), the controller improves in the $L^2$ sense at the cost of the peak value. Conversely, as $y_0 \rightarrow 0$ (the symmetric case), the controller improves in the $L^\infty$ sense at the cost of the convergence speed (related to $L^2$).

\section{Conclusion}
A methodology for designing output feedback bilateral boundary controllers for linear parabolic class PDEs is generated as the main result of the paper. Compared with existing bilateral control designs for parabolic PDEs, the folding appraoch affords additional design degrees of freedom in not only control but also estimator design.

The primary advantage that the folding approach admits is a generalization of bilateral control design. A design for a given performance index e.g. energy ($L^2$) or boundedness ($L^\infty$) can be achieved in a straightforward manner. The unilateral control design is recoverable in the limit from the folding control design; therefore, the design is far more flexible as a methodology.

Without explicit solutions to the gain kernel equations, the effect of the design parameters on system response is difficult to quantify. However, numerical analysis is given which suggests at least qualitative intuition for selecting folding points for desired behavior. A waterbed effect is noted, in which the controller energy ($L^2$) exhibits an inherent trade off with boundedness ($L^\infty$).

The observer result as a stand alone result is particularly interesting theoretically, as it no longer technically falls within the ``boundary'' observer design any more. Of future interest is developing methodology for design of estimators with arbitrarily placed measure zero measurments in the interior. One may even begin to ask more fundamental questions about conditions about the number of measurements needed to make and allowable locations, because it is not immediately obvious how either affects the observability of the system.

The folding approach also opens the door to potential results involving 1-D PDEs exhibiting coupling structures at points on the interior, as opposed to spatially distributed coupling or boundary coupling. An extension to the folding framework, involving an unstable ODE coupled on an arbitrary point on the interior, is explored in \cite{chen2019foldingode}. Certainly, one may begin to explore additional couplings, which involve feedback coupling between the unstable ODE and the parabolic PDE, or even coupling other 1-D PDEs at the boundary.

\label{sec:conclusions}

\bibliographystyle{IEEETranS}
\bibliography{ref2}

\end{document}